\def \N{{\mathbb N}}
\def \R{{\mathbb R}}
\def \Z{{\mathbb Z}}
\def \C{{\mathbb C}}
\def \K{{\mathbb K}}
\def \1{{\mathbb 1}}
\newtheorem{Exemp}{Examples}
\newtheorem{Thm}{Theorem}
\newtheorem{Prop}{Proposition}
\newtheorem{Def}{Definition}
\newtheorem{Rem}{Remark}
\newtheorem{Lem}{Lemma}
\newtheorem{Def Nota}{Definitions and notations} 
\newtheorem{Cor}{Corollary}
\font\ninerm=cmr9
\long\outer\def\abstract#1{\bigskip\vbox{\noindent\ninerm
\baselineskip=10pt#1}\nobreak\bigskip}
\def\exo#1{\advance\numero by 1\bigskip
{\noindent\tenbf #1\the\numero. }}
\def\frac#1#2{{#1\over #2}}
\title{Any law of group metric invariant is an inf-convolution.}   
\author{Mohammed Bachir}
\begin{document}
\maketitle
\begin{center} {\it Laboratoire SAMM 4543, Université Paris 1 Panthéon-Sorbonne, Centre P.M.F. 90 rue Tolbiac 75634 Paris cedex 13}
\end{center}
\begin{center} 
{\it Email : Mohammed.Bachir@univ-paris1.fr}
\end{center}
\noindent\textbf{Abstract.} In this article, we bring a new light on the concept of the inf-convolution operation $\oplus$ and provides additional informations to the work started in \cite{Ba1} and \cite{Ba2}. It is shown that any internal law of group metric invariant (even quasigroup) can be considered as an inf-convolution. Consequently, the operation of the inf-convolution of functions on a group metric invariant is in reality an extension of the internal law of $X$ to  spaces of functions on $X$. We give an example of monoid $(S(X),\oplus)$ for the inf-convolution structure, (which is dense in the set of all $1$-Lipschitz bounded from bellow functions) for which, the map $\arg\min : (S(X),\oplus) \rightarrow (X,.)$ is a (single valued) monoid morphism. It is also proved that, given a group complete metric invariant $(X,d)$, the complete metric space $(\mathcal{K}(X),d_{\infty})$ of all Katetov maps from $X$ to $\R$ equiped with the inf-convolution has a natural monoid structure which provides the following fact: the group of all isometric automorphisms $Aut_{Iso}(\mathcal{K}(X))$ of the monoid $\mathcal{K}(X)$, is isomorphic to the group of all isometric automorphisms $Aut_{Iso}(X)$ of the group $X$. On the other hand, we prove that the subset $\mathcal{K}_C(X)$ of $\mathcal{K}(X)$ of convex functions on a Banach space $X$, can be endowed with a convex cone structure in which $X$ embeds isometrically as Banach space.\\
\\
{\bf Keyword, phrase:} Inf-convolution; group and monoid structure; Katetov functions.\\
{\bf 2010 Mathematics Subject:} 46T99; 26E99; 20M32.
\tableofcontents
\section{Introduction.} 
This article brings some additional informations to the study of the inf-convolution structure developed in \cite{Ba1} and \cite{Ba2}. Given a set $X$, a map $\alpha: X\times X \rightarrow X$ and two real valued functions $f$ and $g$ defined on $X$. The inf-convolution of $f$ and $g$ with respect the map $\alpha$ is defined as follows 
\begin{eqnarray}  \label{Al}
f\underbrace{\oplus}_{\alpha} g (x):= \inf_{y,z\in X/\alpha(y,z)=x}\left\{f(y)+g(z)\right\}; \forall x\in X
\end{eqnarray}
with the convention that $f\underbrace{\oplus}_{\alpha} g (x)=+\infty$ if $\left\{y,z\in X/\alpha(y,z)=x\right\}= \emptyset$.\\

Historically, the inf-convolution appeared as a tool of functional analysis and optimization and starts with the works of Mac Shane \cite{MS}, Fenchel, Moreau and Rockafellar; see \cite{Rok} for references. See also the book of J.-B. Hiriart-Urruty and C. Lemarechal \cite{HL} and the survey of T. Strömberg \cite{TS} for various properties of the inf-convolution operation. We proved in \cite{Ba1} and \cite{Ba2}, that the inf-convolution also enjoys remarkable algebraic properties. For example, we proved that the set $(Lip^1_+(X),\oplus)$ of all no negative and $1$-Lipschitz functions defined on a complete metric invariant group $(X,d)$, is a monoid and its group of unit is isometrically isomorphic to $X$. This result means that the monoid structure of $(Lip^1_+(X),\oplus)$ completely determines the group structure of $X$ whenever $X$ is an group metric invariant.\\

 In this paper, we give additional lighting to the understanding of the inf-convolution operation. Indeed, it seems that the inf-convolution is not an ''external'' operation to the space $X$ acting on it, but is in reality a canonical extension of the internal law of $X$ to the space $Lip^1_+(X)$, whenever $X$ is a group metric invariant. In oder words, any internal law of metric invariant group (even quasigroup) is an inf-convolution. This approach is motivated by Proposition \ref{int} and Theorem \ref{int2} below.\\
 
  A metric space $(X,.,d)$ equipped with an internal law $. : (y,z)\mapsto y. z$ defined from $X\times X$ into $X$ is said to be metric invariant, if 
\begin{eqnarray} 
d(x. y,x. z)=d(y. x,z. x)= d(y,z)\hspace{2mm}\forall x,y, z\in X.\nonumber
\end{eqnarray}
Note that every group is metric invariant for the discreet metric. For examples of not trivial group metric invariant, see \cite{Ba2} (For informations on group complete metric invariant see \cite{K}). Let us denote by $\gamma : x\in X \mapsto \delta_x$ the Kuratowski operator, where $\delta_x: t\in X\mapsto d(x,t).$ We denote by $\hat{X}$ the image of $X$ under the Kuratowski operator, $\hat{X}:=\gamma(X)$. 
The set $\hat{X}$ is endowed with the sup-metric $$d_{\infty}(\gamma(a),\gamma(b)):=\sup_{x\in X}|\gamma(a)(x)-\gamma(b)(x)|.$$ It is well known and easy to see that the  Kuratowski operator $\gamma$ is an isometry: for all $a,b\in X$ 
$$d_{\infty}(\gamma(a),\gamma(b))=d(a,b).$$  We define the inf-convolution on $\hat{X}$ as in the formula (\ref{Al}). For two element $\gamma(a),\gamma(b)\in \hat{X}$,
\begin{eqnarray}  
\left(\gamma(a)\oplus \gamma(b)\right)(x):=\inf_{y, z\in X/y. z=x }\left\{\gamma(a)(y)+\gamma(b)(z)\right\}.\nonumber
\end{eqnarray}

We obtain the following result which say that $(X,.)$ and $(\hat{X}, \oplus)$ has in general the same algebraic structure. Recall that a quasigroup is a nonempty magma $(X , . )$ such that for each pair $(a, b)$ the equation $a.x = b$ has a unique solution on $x$ and the equation $ y.a = b$ has a unique solution on $y$. A loop is an quasigroup with an identity element and a group is an associative loop. 
\begin{Prop} \label{int} Let $(X,.,d)$ be a metric invariant space. Then, the following assertions are equivalent.
\item $(1)$ $(X,.)$ is a quasigroup (respectively, loop, group, commutative group)
\item $(2)$ $(\hat{X}, \oplus)$ is a quasigroup (respectively, loop, group, commutative group).\\

In this case, the Kuratowski operator $\gamma: (X,.,d)\rightarrow (\hat{X},\oplus, d_{\infty})$ is an isometric isomorphism of quasigroups (respectively, loops, groups, commutative groups). 
\end{Prop}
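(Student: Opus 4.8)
The plan is to reduce everything to a single identity, namely that the inf-convolution on $\hat{X}$ transports the law of $X$ through the Kuratowski operator:
\[
\gamma(a)\oplus\gamma(b)=\gamma(a. b)\qquad (\ast)
\]
for all $a,b\in X$. Once $(\ast)$ is available (in the appropriate direction for each implication), the map $\gamma$ becomes a bijective magma morphism, since it is already an isometric bijection from $X$ onto $\hat{X}$ by the remarks preceding the statement. A bijective magma morphism simultaneously preserves and reflects every structural property phrased in terms of the operation alone (solvability of the equations $u. x=v$ and $y. u=v$, existence of a neutral element, associativity, commutativity), so the quasigroup, loop, group and commutative-group natures of $(X,.)$ and of $(\hat{X},\oplus)$ coincide term by term.

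First I would record the one inequality that holds under metric invariance alone, with no algebraic hypothesis. Fix $a,b,x$ and any $y,z$ with $y. z=x$. Writing $d(a. b,y. z)\le d(a. b,y. b)+d(y. b,y. z)$ and using right- and left-invariance, $d(a. b,y. b)=d(a,y)$ and $d(y. b,y. z)=d(b,z)$, so that $d(a,y)+d(b,z)\ge d(a. b,x)$. Taking the infimum over all admissible $(y,z)$ gives the pointwise lower bound $\gamma(a)\oplus\gamma(b)\ge\gamma(a. b)$. Moreover the feasible choice $(y,z)=(a,b)$ shows $\big(\gamma(a)\oplus\gamma(b)\big)(a. b)=0$, a vanishing fact I will use as a normalization to pin down values.

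For the implication $(1)\Rightarrow(2)$ I would assume $(X,.)$ is a quasigroup and upgrade the lower bound to $(\ast)$. Given $x$, the quasigroup property supplies the unique $y$ solving $y. b=x$; then $(y,b)$ is admissible and $d(a,y)+d(b,b)=d(a,y)=d(a. b,y. b)=d(a. b,x)$, which yields the matching upper bound and hence $(\ast)$. In particular $\gamma(a)\oplus\gamma(b)\in\hat{X}$, so $\oplus$ is a genuine internal law on $\hat{X}$ and $\gamma$ is a magma isomorphism; transporting the neutral element, associativity and commutativity through $\gamma$ then gives the loop, group and commutative-group refinements. For the converse $(2)\Rightarrow(1)$ I would not re-prove the upper bound but instead exploit that a quasigroup law is internal by definition: for each $a,b$ there is $c\in X$ with $\gamma(a)\oplus\gamma(b)=\gamma(c)$. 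Evaluating at $a. b$ and using the normalization above gives $d(c,a. b)=0$, so $c=a. b$ and $(\ast)$ holds again; thus $\gamma$ is once more a magma isomorphism and the structure of $(\hat{X},\oplus)$ descends to $(X,.)$.

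The delicate point is exactly the asymmetry between the two directions: the identity $(\ast)$ is never a consequence of the lower bound alone, and the missing upper bound is produced in $(1)\Rightarrow(2)$ by solvability of $y. b=x$ in the quasigroup $X$, whereas in $(2)\Rightarrow(1)$ it is extracted from the mere internality of $\oplus$ on $\hat{X}$ together with the vanishing of $\gamma(a)\oplus\gamma(b)$ at $a. b$. I would therefore be careful to invoke only metric invariance and the triangle inequality in the lower bound, to keep the two arguments for the upper bound logically separate, and finally to verify the routine transfer of neutral element, associativity and commutativity across the magma isomorphism $\gamma$, which is immediate once $(\ast)$ is in hand.
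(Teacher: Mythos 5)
Your proposal is correct and follows essentially the same route as the paper: the lower bound $\gamma(a)\oplus\gamma(b)\geq\gamma(a.b)$ from metric invariance and the triangle inequality is exactly part $1)$ of the paper's Lemma 1, the upper bound via solvability of $y.b=x$ is its part $2)$, and the converse via internality of $\oplus$ plus the vanishing of $\gamma(a)\oplus\gamma(b)$ at $a.b$ matches the paper's argument (which evaluates at $c$ rather than at $a.b$, a trivially equivalent choice). No gaps; the transfer of structure across the bijective magma morphism $\gamma$ is handled the same way in both.
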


 We then ask whether the operation $\oplus$ of $(\hat{X}, \oplus)$ naturally extends to the whole space $(Lip^1_+(X),\oplus)$. An answer is given by the following result. The part $(1) \Rightarrow (2)$ was established in \cite{Ba1} for Banach spaces in convex setting and in \cite{Ba2} in the group framework (as well as the description of the group of unit of $(Lip^1_+(X),\oplus)$). 
\begin{Thm} \label{int2} Let $(X,.,d)$ be a complete metric invariant quasigroup.  Then the following assertions are equivalent.
\item $(1)$ $(X,.)$ is a (commutative) group. 
\item $(2)$ $(Lip^1_+(X),\oplus)$ is a (commutative) monoid.\\

In this case, the identity element of  $(Lip^1_+(X),\oplus)$ is $\gamma(e)$ where $e$ is the identity element of $X$ and its group of unit is $\hat{X}$ which is isometrically isomorphic to $X$.
\end{Thm}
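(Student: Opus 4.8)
The plan is to treat the two implications asymmetrically. The implication $(1)\Rightarrow(2)$, together with the identification of the identity element and of the group of units, is already available from \cite{Ba1} (Banach convex setting) and \cite{Ba2} (group setting), so I would simply invoke those references and note that completeness of $(X,d)$ enters there. The real content is the converse $(2)\Rightarrow(1)$, and the whole argument is organized around transporting the monoid axioms from the large space $Lip^1_+(X)$ down to the isometric copy $\hat{X}$ of $X$ sitting inside it.

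First I would record, using Proposition \ref{int}, that since $(X,.)$ is assumed to be a quasigroup, $\hat{X}$ is closed under $\oplus$ and the Kuratowski operator $\gamma$ is an isomorphism of quasigroups; in particular $\gamma(a)\oplus\gamma(b)=\gamma(a.b)$ for all $a,b\in X$, and $\gamma$ is injective because it is an isometry. This is the bridge that lets me pull identities valid on $Lip^1_+(X)$ back onto $X$.

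Next, assuming $(2)$, I would use only the associativity of $\oplus$. Applying it to $\gamma(a),\gamma(b),\gamma(c)\in\hat{X}$ and rewriting each side via $\gamma(a)\oplus\gamma(b)=\gamma(a.b)$ gives $\gamma((a.b).c)=\gamma(a.(b.c))$, whence $(a.b).c=a.(b.c)$ by injectivity of $\gamma$; thus $(X,.)$ is an associative quasigroup. I would then invoke the classical algebraic fact that an associative quasigroup is automatically a group: the unique element $e$ solving $a.e=a$ becomes a two-sided identity once associativity is available (for $b=c.a$ one has $b.e=(c.a).e=c.(a.e)=c.a=b$), and the left and right solutions of $a.x=e$ and $y.a=e$ coincide. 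This establishes $(1)$. If the monoid is moreover commutative, the same transport applied to $\gamma(a)\oplus\gamma(b)=\gamma(b)\oplus\gamma(a)$ yields $a.b=b.a$, giving the parenthetical commutative statement.

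Finally, for the ``in this case'' assertions, once $X$ is known to be a group with identity $e$, I would verify that $\gamma(e)$ is the neutral element of $(Lip^1_+(X),\oplus)$: the inequality $\gamma(e)\oplus f\le f$ comes from the admissible decomposition $x=e.x$, while the reverse inequality follows from the $1$-Lipschitz property of $f$ combined with the right-invariance identity $d(x,z)=d(y.z,e.z)=d(y,e)$ valid whenever $y.z=x$. The two-sided character of $\gamma(e)$ and the description of the group of units as $\hat{X}\cong X$ can then be quoted from \cite{Ba2}, or re-derived through Proposition \ref{int}. The only genuine obstacle is ensuring that the monoid axioms really restrict to $\hat{X}$, which is precisely what the closure statement of Proposition \ref{int} guarantees; beyond that the argument is a short transport-and-quote.
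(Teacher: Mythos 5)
Your proposal is correct, but the converse direction takes a genuinely different route from the paper's. The paper's proof of $(2)\Rightarrow(1)$ leans on Theorem \ref{Fond1} (equivalently Theorem \ref{Fond0}): it takes the identity element $f_0$ of the monoid, notes that $\gamma(a)\oplus f_0=\gamma(a)$ has a strong minimum at $a$, and uses the strong-minimum decomposition to produce $\tilde z$ with $a\tilde z=\tilde z a=a$ for all $a$, i.e.\ an identity element of $X$; only then does it transport associativity through $\gamma(a)\oplus\gamma(b)=\gamma(ab)$ to conclude that $X$ is an associative loop, hence a group. You bypass the variational step entirely: you transport associativity (and commutativity) through the same formula $\gamma(a)\oplus\gamma(b)=\gamma(ab)$ from Lemma \ref{loop}/Proposition \ref{int}, and then invoke the classical fact that an associative quasigroup is automatically a group, which manufactures the identity element by pure algebra. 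Your argument is leaner and, notably, does not use the completeness of $(X,d)$ for this implication (Lemma \ref{loop} and Proposition \ref{int} need only metric invariance), whereas the paper's route requires completeness because Theorem \ref{Fond0} does; what you lose is only the illustration of the strong-minimum machinery that the paper is advertising as its central tool. Your direct verification that $\gamma(e)$ is neutral (via $x=e.x$ for one inequality and $d(x,z)=d(yz,ez)=d(y,e)$ plus the $1$-Lipschitz bound for the other) is sound, and quoting \cite{Ba2} or Corollary \ref{3} for the group of units matches what the paper does.
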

The Proposition \ref{int} and Theorem \ref{int2} are in our opinion the arguments showing that the monoid structure of $(Lip^1_+(X),\oplus)$ is in reality a natural extension of the group structure of $(X,.)$ to the set $Lip^1_+(X)$.\\

We use the following result in the proof of Theorem \ref{int2}. This result is the key of this algebraic theory of the inf-convolution. The part $I)\Rightarrow II)$ was proved in \cite{Ba2}. The part $II)\Rightarrow I)$ is new. A more general form in metric space framework not necessarily group is given in section \ref{S1}. 
\begin{Thm}  \label{Fond1} Let $(X,.,d)$ be a group complete metric invariant and let $a\in X$.  Let $f$ and $g$ be two lower semi continuous functions on $(X,d)$. Then, the following assertions are equivalent \\

$I)$ the map $x \mapsto f\oplus g(x)$ has a strong minimum at $a$ \\

$II)$ there exists $(\tilde{y},\tilde{z})\in X\times X$ such that $\tilde{y}  \tilde{z}=a$ and :
 $f$ has a strong minimum at $\tilde{y}$ and $g$ has at strong minimum a $\tilde{z}$. 
\end{Thm}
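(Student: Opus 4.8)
The plan is to prove the equivalence $I) \Leftrightarrow II)$ by exploiting the metric invariance of the group structure together with the definition of strong minimum. Recall that a function $h$ has a \emph{strong minimum} at a point $a$ if $h(a) = \inf_X h$ and every minimizing sequence $(x_n)$ (i.e. $h(x_n) \to \inf_X h$) converges to $a$. The key analytic input will be the completeness of $(X,d)$ and the lower semicontinuity of $f$ and $g$, which together guarantee that minimizing sequences have limits realizing the infimum.

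\textbf{Direction $II) \Rightarrow I)$.} First I would assume $f$ has a strong minimum at $\tilde{y}$ and $g$ at $\tilde{z}$, with $\tilde{y}\tilde{z} = a$. The easy part is to check that $f\oplus g$ attains its infimum at $a$: for any decomposition $x = yz$ one has $f(y) + g(z) \geq f(\tilde{y}) + g(\tilde{z})$, and taking $y = \tilde{y}$, $z = \tilde{z}$ shows $f\oplus g(a) = f(\tilde{y}) + g(\tilde{z}) = \inf_X f + \inf_X g$. The substantive work is to show the minimum is \emph{strong}. Given a minimizing sequence $(x_n)$ for $f \oplus g$, I would use lower semicontinuity and completeness to extract, for each $n$, a near-optimal decomposition $x_n = y_n z_n$ with $f(y_n) + g(z_n)$ close to $f\oplus g(x_n)$; then $f(y_n) \to \inf f$ and $g(z_n) \to \inf g$ forces $y_n \to \tilde{y}$ and $z_n \to \tilde{z}$ by the strong minimum hypothesis, whence $x_n = y_n z_n \to \tilde{y}\tilde{z} = a$ using continuity of the group law (which follows from metric invariance). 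The delicate point is justifying that the infimum in the inf-convolution is actually attained or approximated by a genuinely convergent optimizing pair, which is exactly where completeness and lower semicontinuity enter.

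\textbf{Direction $I) \Rightarrow II)$.} Conversely, assume $f \oplus g$ has a strong minimum at $a$. Here the main obstacle is to \emph{produce} the pair $(\tilde{y}, \tilde{z})$ and to transfer the strong-minimum property from the convolution down to each factor. I would take a minimizing sequence for $f\oplus g(a)$, i.e. decompositions $a = y_n z_n$ with $f(y_n) + g(z_n) \to f\oplus g(a)$, and argue that $(y_n)$ and $(z_n)$ are each minimizing for $f$ and $g$ respectively — this requires ruling out the possibility that a deficit in one factor is compensated by a surplus in the other, which I expect to handle via the invariance identity $d(y_n x, \tilde{y} x) = d(y_n, \tilde{y})$ and a translation argument converting the constraint $a = yz$ into independent control of each coordinate. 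Completeness then yields limits $\tilde{y}, \tilde{z}$ with $\tilde{y}\tilde{z} = a$, at which $f$ and $g$ attain their infima. To upgrade to strong minima, I would argue by contradiction: a non-strong minimum of, say, $f$ at $\tilde{y}$ would give a minimizing sequence $(y_n')$ for $f$ not converging to $\tilde{y}$; pairing it with $\tilde{z}$ and using the group structure to form $y_n' \tilde{z}$ produces a minimizing sequence for $f\oplus g$ that does not converge to $a$, contradicting $I)$.

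The principal difficulty throughout is the interaction between the variational notion of strong minimum and the algebraic/metric structure: one must repeatedly convert statements about the single variable $x = yz$ into statements about the pair $(y,z)$ and back, and the only tool making this rigorous is the metric invariance $d(y\cdot x, z\cdot x) = d(y,z)$ combined with the quasigroup (here, group) solvability that lets one freely translate decompositions. I would expect the cleanest route to isolate a lemma stating that, under completeness and lower semicontinuity, every minimizing sequence of $f\oplus g$ admits near-optimal decompositions whose factors are minimizing for $f$ and $g$ individually; once that lemma is in hand, both implications follow by the convergence arguments sketched above.
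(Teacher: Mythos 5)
Your direction $II)\Rightarrow I)$ is essentially the paper's argument and is sound: for a minimizing sequence $(x_n)$ of $f\oplus g$ with near-optimal decompositions $x_n=y_nz_n$, the two deficits $f(y_n)-f(\tilde y)$ and $g(z_n)-g(\tilde z)$ are nonnegative and sum to $o(1)$, so each tends to $0$; the strong minima of $f$ and $g$ then give $y_n\to\tilde y$ and $z_n\to\tilde z$, and continuity of the law (a consequence of metric invariance) gives $x_n\to a$.

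The gap is in $I)\Rightarrow II)$, at the sentence ``Completeness then yields limits $\tilde y,\tilde z$.'' Completeness turns Cauchy sequences into convergent ones; it does nothing for a minimizing sequence of $f$ unless you already know $(y_n)$ is Cauchy, and a priori it need not be: for all you know at this stage $f$ could have many far-apart near-minimizers, since the statement that $f$ has a strong minimum is precisely the conclusion you are after. You have also misplaced the difficulty: showing that $(y_n)$ and $(z_n)$ are minimizing for $f$ and $g$ is the easy part (in a group $\inf_x f\oplus g(x)=\inf_X f+\inf_X g$, so the two nonnegative deficits sum to $o(1)$ and no compensation between the factors is possible), whereas the hard part is the Cauchy estimate; your appeal to the identity $d(y_nx,\tilde yx)=d(y_n,\tilde y)$ is circular there because $\tilde y$ has not yet been constructed. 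The missing idea, which is the crux of the paper's proof, is to mix the decompositions: for two indices $n,p$ one has $f\oplus g(y_nz_p)\le f(y_n)+g(z_p)\to f\oplus g(a)$ (the paper uses the symmetric form $f\oplus g(y_nz_p)+f\oplus g(y_pz_n)\le \left(f(y_n)+g(z_n)\right)+\left(f(y_p)+g(z_p)\right)\le \frac1n+\frac1p$ after normalizing $f\oplus g(a)=0$), so the strong minimum of $f\oplus g$ at $a$ forces $d(y_nz_p,a)=d(y_nz_p,y_pz_p)\to 0$, and metric invariance converts this into $d(y_n,y_p)\to 0$. Only now does completeness produce $\tilde y$ (and similarly $\tilde z$), lower semicontinuity gives $f(\tilde y)+g(\tilde z)=f\oplus g(a)$, and your contradiction argument (or the paper's direct inequality $f(x)-f(\tilde y)\ge f\oplus g(x\tilde z)-f\oplus g(a)$) upgrades the minima of $f$ and $g$ to strong minima. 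Without this cross-term step the pair $(\tilde y,\tilde z)$ is never constructed and the implication does not go through.
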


Theorem \ref{Fond1} also gives the following corollary. Consider the following submonoid of $Lip^1(X)$ 
$$S(X):=\left\{f\in Lip^1(X)/ \hspace{2mm}f  \hspace{2mm} \textnormal{has a strong minimum}\right\}$$
and the metric $\rho$ defined for $f,g\in Lip^1(X)$ by 
\[
\rho(f,g)= \sup_{x\in X} \frac{|f(x)-g(x)|}{1+|f(x)-g(x)|}.
\]
 For a real-valued function $f$ with domain $X$, $\arg\min(f)$ is the set of elements in $X$ that realize the global minimum in $X$,
 $${\arg\min}(f)=\{x\in X:\,f(x)=\inf_{{y\in X}}f(y)\}.$$
 For the class of functions $f\in S(X)$, $ {\arg\min}(f)=\left\{x_f\right\}$ is a singleton, where $x_f$ is the strong minimum of $f$. We identify the singleton  $\left\{x\right\}$ with the element $x$. 
 \begin{Cor} Let $(X,.,d)$ be a group complete metric invariant having $e$ as identity element. Then, $(S(X),\rho)$ is a dense subset of $Lip^1(X)$ and for all $f, g \in S(X)$ we have
\[
{\arg\min}\left(f\oplus g\right)={\arg\min}(f).{\arg\min}(g).
\]
 In other words, the map $ {\arg\min} : (S(X),\oplus, \rho) \rightarrow (X,.,d)$ is continuous monoid morphism and onto. We have the following commutative diagram, where $I$ denotes the identity map on $X$ and $\gamma$ the Kuratowski operator
  \[
  \xymatrix{
     (X,.) \ar[r]^{\gamma} \ar[rd]_{I}  &  (S(X),\oplus) \ar[d]^{{\arg\min}} \\
      & (X,.) } 
 \]
 \end{Cor}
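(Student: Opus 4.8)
The plan is to derive the three assertions — the $\arg\min$ formula, the morphism/surjectivity statement, and the density of $S(X)$ — essentially from Theorem \ref{Fond1} together with one explicit perturbation, the rest being routine verifications.

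First I would treat the algebraic heart, the identity $\arg\min(f\oplus g)=\arg\min(f)\cdot\arg\min(g)$. Given $f,g\in S(X)$, write $\arg\min(f)=\{x_f\}$ and $\arg\min(g)=\{x_g\}$, so $f$ has a strong minimum at $x_f$ and $g$ at $x_g$. Applying the implication $II)\Rightarrow I)$ of Theorem \ref{Fond1} with $\tilde y=x_f$, $\tilde z=x_g$ and $a=x_f\cdot x_g$ shows that $f\oplus g$ has a strong minimum at $x_f\cdot x_g$; since a strong minimum is automatically unique, this gives $\arg\min(f\oplus g)=\{x_f\cdot x_g\}$, which under the singleton identification is exactly $\arg\min(f)\cdot\arg\min(g)$. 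In particular $f\oplus g\in S(X)$, confirming that $(S(X),\oplus)$ is stable. The identity element is $\gamma(e)$: using left invariance $d(e,y^{-1}x)=d(y,x)$ and the $1$-Lipschitz inequality one checks $f\oplus\gamma(e)=\gamma(e)\oplus f=f$ on $Lip^1(X)$, while $\gamma(e)=d(e,\cdot)$ lies in $S(X)$ with $\arg\min(\gamma(e))=e$. Hence $\arg\min$ sends the identity to the identity and respects $\oplus$, i.e. it is a monoid morphism. Surjectivity, and simultaneously the commutative diagram, follow from the observation that for every $x\in X$ the map $\gamma(x)=d(x,\cdot)$ belongs to $S(X)$ and satisfies $\arg\min(\gamma(x))=x$, so that $\arg\min\circ\gamma=I$.

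For continuity I would first note that $\rho(f_n,f)\to 0$ is equivalent to uniform convergence $s_n:=\sup_{x}|f_n(x)-f(x)|\to 0$, because $t\mapsto t/(1+t)$ is an increasing homeomorphism from $[0,\infty]$ onto $[0,1]$ and hence $\rho(f_n,f)=s_n/(1+s_n)$. Fix $f\in S(X)$ with strong minimum $x_f$ and take $f_n\in S(X)$ with $\rho(f_n,f)\to 0$; set $x_n:=\arg\min(f_n)$. From $|\inf f_n-\inf f|\le s_n$ one gets $\inf f_n\to\inf f$, and then
\[
\inf f\le f(x_n)\le f_n(x_n)+s_n=\inf f_n+s_n\longrightarrow \inf f,
\]
so $f(x_n)\to\inf f=f(x_f)$. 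Since $f$ has a \emph{strong} minimum at $x_f$, this forces $x_n\to x_f$, that is $\arg\min(f_n)\to\arg\min(f)$, which is precisely the continuity of $\arg\min:(S(X),\rho)\to(X,d)$.

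The remaining, and in my view most delicate, point is the density of $S(X)$ in $Lip^1(X)$. Given $f\in Lip^1(X)$ and $\varepsilon>0$, I would choose a near-minimizer $x_0$ with $f(x_0)<\inf f+\varepsilon$ and set $\phi(x):=(f(x_0)-\varepsilon)+d(x,x_0)$ and $g:=\min(f,\phi)$. Then $g\in Lip^1(X)$ as the minimum of two $1$-Lipschitz maps, and $g$ has a strong minimum at $x_0$: indeed $\inf g=f(x_0)-\varepsilon<\inf f$ is attained at $x_0$ through $\phi$, while along any sequence with $g(x_n)\to\inf g$ the value eventually cannot equal $f(x_n)\ge\inf f>\inf g$, so $g(x_n)=\phi(x_n)$ for large $n$, forcing $d(x_n,x_0)\to 0$. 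Finally the $1$-Lipschitz bound $f(x)\le f(x_0)+d(x,x_0)$ gives $\phi(x)\ge f(x)-\varepsilon$, hence $0\le f-g\le\varepsilon$ and $\rho(f,g)\le\varepsilon/(1+\varepsilon)$, so $g$ is as close to $f$ as desired. The main obstacle here is exactly to produce a $1$-Lipschitz perturbation (to remain in $Lip^1(X)$) that nonetheless creates a \emph{strong} — not merely strict — minimum; the cone $\phi$ pushed just below the infimum of $f$ is what reconciles these two constraints.
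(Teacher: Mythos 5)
Your proof is correct. For the $\arg\min$ identity and the continuity of $\arg\min$ you follow essentially the paper's own route: the identity $\arg\min(f\oplus g)=\arg\min(f)\cdot\arg\min(g)$ is read off from the implication $II)\Rightarrow I)$ of Theorem \ref{Fond1} (Theorem \ref{Fond0}), the morphism and surjectivity claims come from $\gamma(e)$ being the identity and $\arg\min\circ\gamma=I$, and your continuity argument ($\inf f_n\to\inf f$, hence $f(x_n)\to f(x_f)$, hence $x_n\to x_f$ by strongness) is the same estimate the paper carries out with the explicit bound $2\alpha/(1-\alpha)$. Where you genuinely diverge is the density of $S(X)$: the paper first shrinks $f$ to the $(1-\epsilon)$-Lipschitz function $(1-\epsilon)f$ and then invokes the Deville--Godefroy--Zizler variational principle to produce an $\epsilon$-small, $\epsilon$-Lipschitz perturbation creating a strong minimum, whereas you build the perturbation by hand, replacing $f$ by $\min\bigl(f,\;f(x_0)-\varepsilon+d(\cdot,x_0)\bigr)$ for a near-minimizer $x_0$. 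Your cone construction is more elementary and self-contained (no external variational principle, and the verification that the truncation stays $1$-Lipschitz, creates a strong minimum at $x_0$, and satisfies $0\le f-g\le\varepsilon$ is all correct); the paper's route buys less explicit work at the cost of citing DGZ. The only implicit step you lean on is that $f\oplus g$ again lies in $Lip^1(X)$ (so that $f\oplus g\in S(X)$ and $S(X)$ is a genuine submonoid), which is the background monoid structure of $(Lip^1(X),\oplus)$ established in the paper; it is worth saying so explicitly, but it is not a gap.
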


We are also interested on the monoid structure of the set $\mathcal{K}(X)$ of Katetov functions. There are lot of literature on the metric and the topological structure of this space (See for instance \cite{BY}, \cite{KT} and \cite{MJ}). We give in this section some results about the monoid structure of $\mathcal{K}(X)$ when $X$ is a group, and the convex cone structure of the subset $\mathcal{K}_C(X)$ of $\mathcal{K}(X)$ (of convex functions) when $X$ is a Banach space. 
Let $(X,d)$ be a metric space; we say that $f: X\rightarrow \R$ is a 
Katetov map if 
\begin{eqnarray} \label{eq1}
|f(x)-f(y)|\leq d(x,y)\leq f(x)+f(y); \hspace{2mm} \forall x,y\in X.
\end{eqnarray} 
These maps correspond to one-point metric extensions of $X$. We denote by $\mathcal{K}(X)$
the set of all Katetov maps on $X$; we endow it with the sup-metric $$d_{\infty}\left(f,g\right):=\sup_{x\in X}|f(x)-g(x)|<+\infty$$ which turns it
into a complete metric space. Recall that $X$ isometrically embeds in $\mathcal{K}(X)$ via the Kuratowski embedding $\gamma: x\rightarrow
\delta_x$, where $\delta_x(y) := d(x, y)$, and that one has, for any $f \in \mathcal{K}(X)$, that $d_{\infty}(f,\gamma(x)) = f(x)$. It is shown in Section \ref{S2} that $(\mathcal{K}(X),\oplus)$ has a monoid structure and $\mathcal{K}_C(X)$ has a convex cone structure. We obtain the following analogous to the Banach-Stone theorem which say that the metric monoid $(\mathcal{K}(X),\oplus,d_{\infty})$, completely determine the complete metric invariant group $(X,d)$. Note that in the following result, any monoid isometric isomorphism has the canonical form. We do not know if this is the case for other monoids as the set of all convex $1$-Lipschitz bounded from bellow functions defined on Banach space (See Problem 2. in \cite{Ba1}).
 
\begin{Thm} Let $(X,d)$ and $(Y,d')$ be two  complete metric invariant groups. Then, a map $\Phi : (\mathcal{K}(X),\oplus,d_{\infty})\rightarrow (\mathcal{K}(Y),\oplus,d_{\infty})$ is a monoid isometric isomorphism if, and only if there exists a group isometric isomorphism $T: (X,d)\rightarrow (Y,d')$ such that $\Phi(f)=f\circ T^{-1}$ for all $f\in \mathcal{K}(X)$. Consequently, $Aut_{Iso}(\mathcal{K}(X))$ (the group of all isometric automorphism of the monoid $\mathcal{K}(X)$) is isomorphic as group to $Aut_{Iso}(X)$ (the group of all isometric automorphism of the group $X$).
\end{Thm}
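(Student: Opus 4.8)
The plan is to prove the nontrivial direction (that every monoid isometric isomorphism arises from a group isometric isomorphism $T$ via $\Phi(f)=f\circ T^{-1}$), since the converse is a routine verification: if $T$ is a group isometric isomorphism then $f\mapsto f\circ T^{-1}$ clearly preserves the Katetov inequalities~(\ref{eq1}), the sup-metric $d_\infty$, and the inf-convolution $\oplus$ (because $T$ transports the law of $X$ to that of $Y$).

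For the main direction, suppose $\Phi$ is a monoid isometric isomorphism. The key observation is that the group of units of the monoid $(\mathcal{K}(X),\oplus)$ should be exactly $\hat{X}=\gamma(X)$, by analogy with Theorem~\ref{int2} and Proposition~\ref{int} (the identity element being $\gamma(e)$). First I would establish that the Kuratowski image $\hat X$ is precisely the group of units of $(\mathcal{K}(X),\oplus)$; this is the algebraic heart, identifying the invertible elements of the monoid with the isometric copy of $X$. Since any monoid isomorphism must carry units to units bijectively, $\Phi$ restricts to a bijection $\hat X\to\hat Y$. Composing with the Kuratowski isometries $\gamma_X$ and $\gamma_Y$, I define $T:=\gamma_Y^{-1}\circ\Phi\circ\gamma_X:(X,d)\to(Y,d')$. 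Because $\Phi$ is an isometry for $d_\infty$ and the $\gamma$'s are isometries (as noted in the excerpt), $T$ is a surjective isometry; because $\Phi$ is a monoid morphism and $\gamma$ is a quasigroup/group isomorphism onto $\hat X$ (Proposition~\ref{int}), $T$ is a group homomorphism. Hence $T$ is a group isometric isomorphism.

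It then remains to recover the formula $\Phi(f)=f\circ T^{-1}$ on all of $\mathcal{K}(X)$, not merely on units. The crucial tool is the excerpt's identity $d_\infty(f,\gamma(x))=f(x)$ for every $f\in\mathcal{K}(X)$ and $x\in X$. For arbitrary $f\in\mathcal{K}(X)$ and $y\in Y$, I would compute
\[
\Phi(f)(y)=d_\infty\bigl(\Phi(f),\gamma_Y(y)\bigr)=d_\infty\bigl(\Phi(f),\Phi(\gamma_X(T^{-1}y))\bigr)=d_\infty\bigl(f,\gamma_X(T^{-1}y)\bigr)=f(T^{-1}y),
\]
using in turn the evaluation identity on $Y$, the fact that $\gamma_Y(y)=\Phi(\gamma_X(T^{-1}y))$ by definition of $T$, the isometry property of $\Phi$, and the evaluation identity on $X$. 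This yields $\Phi(f)=f\circ T^{-1}$, as desired, and simultaneously forces the canonical form announced in the statement.

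The last sentence about automorphism groups follows formally: specializing to $Y=X$ and $d'=d$, the correspondence $T\mapsto(f\mapsto f\circ T^{-1})$ is a group isomorphism $Aut_{Iso}(X)\to Aut_{Iso}(\mathcal{K}(X))$, since composition of the induced maps corresponds to composition of the $T$'s. The main obstacle I anticipate is the first step, namely proving rigorously that the group of units of $(\mathcal{K}(X),\oplus)$ is exactly $\hat X$: the easy inclusion is that each $\gamma(x)$ is invertible with inverse $\gamma(x^{-1})$ (so $\gamma(x)\oplus\gamma(x^{-1})=\gamma(e)$), but showing that no other Katetov map can be a unit requires a minimization argument, presumably via Theorem~\ref{Fond1} on strong minima of inf-convolutions, to force any invertible $f$ to have a strong minimum of the right shape and thus coincide with some $\gamma(x)$.
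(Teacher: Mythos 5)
Your proposal is correct and follows essentially the same route as the paper: identify the group of units of $(\mathcal{K}(X),\oplus)$ with $\hat{X}$ (the paper's Proposition \ref{inv}, whose hard inclusion indeed rests on the strong-minimum machinery you anticipate), define $T:=\gamma_Y^{-1}\circ\Phi_{|\hat{X}}\circ\gamma_X$, and recover $\Phi(f)=f\circ T^{-1}$ from the evaluation identity $d_{\infty}(f,\gamma(x))=f(x)$. No substantive differences from the paper's argument.
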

\section{The inf-convolution on complete metric space.}\label{S1}
The main theorem of this section (Theorem \ref{Fond0}) extend  [Theorem 3, \cite{Ba2}] and [Corollary 3, \cite{Ba2}] to complete metric invariant space. In [Theorem 3, \cite{Ba2}] and [Corollary 3, \cite{Ba2}], only the part $I)\Rightarrow II)$ was proved in the group context. Here, we give a necessarily and sufficient condition in the more general metric context.\\

We need some notations and definitions. Let $X$ be a set and $\alpha: X\times X \rightarrow X$ be a map. Given $x\in X$ we denote by $\Delta_\alpha(x)$ the following set depending on $\alpha$
$$\Delta_\alpha(x):=\left\{(y,z)\in X\times X: \alpha(y,z)=x\right\}\subset X\times X .$$
Note that $\Delta_\alpha(\alpha(s,t))\neq \emptyset$ for all $s,t\in X$. We also denote by $\Delta_{1,\alpha}(x)$ (respectively, $\Delta_{2,\alpha}(x)$) the projection of $\Delta_\alpha(x)$ on the first (respectively, the second) coordinate: 
$$\Delta_{1,\alpha}(x):=\left\{y\in X/ \exists z_y\in X: \alpha(y,z_y)=x\right\}\subset X.$$
$$\Delta_{2,\alpha}(x):=\left\{z\in X/ \exists y_z\in X: \alpha(y_z,z)=x\right\}\subset X.$$
\begin{Def} \label{def1} Let $(X,d)$ be metric space and $\alpha: X\times X \rightarrow X$, be a map. We say that $\alpha$ is $d$-invariant at $x\in X$, if $\Delta_\alpha(x)\neq \emptyset$ and there exists $L_1, L_2, L'_1, L'_2>0$ such that
$$L_2d(y_1,y_2)\leq d(\alpha(y_1,z),\alpha(y_2,z))\leq L_1d(y_1,y_2); \hspace{2mm}\forall y_1, y_2\in \Delta_{1,\alpha}(x); z\in \Delta_{2,\alpha}(x).$$ 
and
$$L'_2d(z_1,z_2) \leq d(\alpha(y,z_1),\alpha(y,z_2))\leq L'_1d(z_1,z_2); \hspace{2mm}\forall z_1, z_2\in \Delta_{2,\alpha}(x); y\in \Delta_{1,\alpha}(x).$$
\end{Def} 
The set $\Delta_\alpha(x)$ is endowed with the metric induced by the product metric topology of $X\times X$ i.e $\tilde{d}\left((y,z),(y',z')\right):=d(y,y')+d(z,z')$ for all $(y,z),(y',z') \in X\times X$.
\begin{Prop} \label{cont} Let $(X,d)$ be metric space and $\alpha: X\times X \rightarrow X$ be a map. Suppose that $\alpha$ is $d$-invariant at $x\in X$. Then the restriction of $\alpha$ to the set $\Delta_{\alpha}(x)$ is continuous.
\end{Prop}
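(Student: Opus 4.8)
The plan is to show that $\alpha$ restricted to $\Delta_\alpha(x)$ is continuous at an arbitrary point $(y_0,z_0)\in\Delta_\alpha(x)$, working directly from the $d$-invariance inequalities of Definition \ref{def1}. Since $\Delta_\alpha(x)$ carries the metric $\tilde{d}((y,z),(y',z'))=d(y,y')+d(z,z')$, it suffices to estimate $d(\alpha(y,z),\alpha(y_0,z_0))$ in terms of $d(y,y_0)$ and $d(z,z_0)$ for $(y,z)\in\Delta_\alpha(x)$. The natural device is to insert the intermediate point $\alpha(y_0,z)$ and apply the triangle inequality, which splits the quantity we want to control into a part where only the first coordinate varies and a part where only the second coordinate varies. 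Both parts are then exactly the kind of expression the hypotheses bound from above.

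More precisely, for any $(y,z),(y_0,z_0)\in\Delta_\alpha(x)$ I would write
\begin{eqnarray}
d(\alpha(y,z),\alpha(y_0,z_0)) &\leq& d(\alpha(y,z),\alpha(y_0,z)) + d(\alpha(y_0,z),\alpha(y_0,z_0)).\nonumber
\end{eqnarray}
For the first term, note that $y,y_0\in\Delta_{1,\alpha}(x)$ and $z\in\Delta_{2,\alpha}(x)$, so the upper bound in the first invariance inequality gives $d(\alpha(y,z),\alpha(y_0,z))\leq L_1 d(y,y_0)$. For the second term, $y_0\in\Delta_{1,\alpha}(x)$ and $z,z_0\in\Delta_{2,\alpha}(x)$, so the upper bound in the second invariance inequality gives $d(\alpha(y_0,z),\alpha(y_0,z_0))\leq L'_1 d(z,z_0)$. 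Combining, I obtain
\begin{eqnarray}
d(\alpha(y,z),\alpha(y_0,z_0)) &\leq& L_1 d(y,y_0)+L'_1 d(z,z_0) \leq \max(L_1,L'_1)\,\tilde{d}\big((y,z),(y_0,z_0)\big),\nonumber
\end{eqnarray}
which shows that the restriction of $\alpha$ to $\Delta_\alpha(x)$ is in fact Lipschitz with constant $\max(L_1,L'_1)$, hence continuous.

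There is essentially no serious obstacle here: only the upper bounds in Definition \ref{def1} are used, and the lower bounds (with $L_2,L'_2$) play no role in this statement. The one point requiring a little care is the membership bookkeeping, namely checking that each intermediate evaluation $\alpha(y_0,z)$ is legitimate and that the indices $y,y_0\in\Delta_{1,\alpha}(x)$ and $z,z_0\in\Delta_{2,\alpha}(x)$ lie in the correct projection sets so that the hypotheses of Definition \ref{def1} genuinely apply. This is immediate from the definitions of $\Delta_{1,\alpha}(x)$ and $\Delta_{2,\alpha}(x)$ as the coordinate projections of $\Delta_\alpha(x)$, so the argument closes cleanly and in fact yields the stronger conclusion that $\alpha|_{\Delta_\alpha(x)}$ is Lipschitz.
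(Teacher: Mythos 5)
Your proof is correct and follows essentially the same route as the paper: a triangle inequality through an intermediate mixed point (you use $\alpha(y_0,z)$ where the paper uses $\alpha(y,z_0)$, a purely symmetric choice), followed by the two upper bounds from Definition \ref{def1}, yielding the Lipschitz estimate with constant $\max(L_1,L'_1)$. No further comment is needed.
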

\begin{proof} Let $(y,z), (y_0,z_0)\in \Delta_{\alpha}(x)$. Then, 
\begin{eqnarray}
d(\alpha(y,z),\alpha(y_0,z_0))&\leq& d(\alpha(y,z),\alpha(y,z_0))+d(\alpha(y,z_0),\alpha(y_0,z_0))\nonumber\\
                              &\leq& L'_1d(z,z_0)+L_1d(y,y_0)\nonumber\\
                              &\leq& \max(L'_1, L_1) \left(d(z,z_0)+d(y,y_0)\right)\nonumber
\end{eqnarray}
This inequality shows that the restriction of $\alpha$ to the set $\Delta_{\alpha}(x)$ is continuous.
\end{proof}

For two functions $f$ and $g$ on $X$, we define the map $\eta_{f,g}$ depending on $f$ and $g$ by
\begin{eqnarray}
\eta_{f,g} : X\times X&\rightarrow& \R\cup \left\{+\infty\right\}\nonumber\\
 (y,z) &\mapsto& f(y)+g(z) \nonumber
\end{eqnarray}

Note that the inf convolution of $f$ and $g$ at $x\in X$, with respect to the law $\alpha$, coincide with the infinimum of $\eta_{f,g}$ on $\Delta_\alpha(x)$
$$f\underbrace{\oplus}_{\alpha} g(x):=\inf_{y,z\in X/\alpha(y,z)=x}\left\{f(y)+g(z)\right\}:=\inf_{(y,z)\in \Delta_\alpha(x)}\eta_{f,g}(y,z).$$
\begin{Exemp} The Definition \ref{def1} is satisfied in the following cases.
\item $1)$ Let $(X,\|.\|)$ be a vector normed space and $\alpha : X\times X\rightarrow X$ be the map defined by $\alpha(y,z):=y+z$. In this case, the inf-convolution correspond to the classical definition of the inf-convolution on vector space and we have $\Delta_{1,\alpha}(x)=\Delta_{2,\alpha}(x)=X$ for all $x\in X$ and $\alpha$ satisfies  $$\|\alpha(y,x)-\alpha(z,x)\|=\|\alpha(x,y)-\alpha(x,z)\|=\|y-z\|; \hspace{2mm}\forall x,y, z\in X.$$
\item $2)$ Let $(C,\|.\|)$ be a convex subset of a vector normed space $(X,\|.\|)$ and let $\lambda\in ]0,1[$ be a fixed real number. Let $\alpha : C\times C\rightarrow C$ be the map defined by $\alpha(y,z):=\lambda y+ (1-\lambda)z$. Then, $\left\{(x,x)\right\}\subset \Delta_\alpha(x)$ and  $\Delta_\alpha(x)= \left\{(x,x)\right\}$ if, and only if $x$ is an extreme point of $C$ and we have 
$$\|\alpha(y,x)-\alpha(z,x)\|=\lambda\|y-z\|; \hspace{2mm}\forall x,y, z\in C.$$
and
$$\|\alpha(x,y)-\alpha(x,z)\|=(1-\lambda)\|y-z\|; \hspace{2mm}\forall x,y, z\in C.$$
\item $3)$ If $(X,.,d)$ is a metric group, ($.$ is the law of internal composition of $X$) and $\alpha : (y,z)\mapsto y. z$,  then $\Delta_{1,\alpha}(x)=\Delta_{2,\alpha}(x)=X$ for all $x\in X$. Moreover, $\alpha$ is $d$-invariant at $x$ for each $x\in X$ if, $(X,.,d)$ is metric invariant. We recall that a metric group is said to be metric invariant, if $d(y. x,z. x)=d(x. y,x. z)= d(y,z)$ for all $x,y,z\in X$. Every group is metric invariant for the discreet metric. We can find examples of group metric invariant in \cite{Ba2}.\\
\item $4)$ However, there exists examples of metric monoids $(M,d)$ with a law $.$ which is not metric invariant but such that $.$ is $d$-invariant at each element of the group of unit of $M$ (See Proposition \ref{x2} and Remark \ref{Z}).
\end{Exemp}
 
\begin{Def} Let $(X,d)$ be a metric space, we say that a function $f$ has a strong minimum at $x_0\in X$, if $\inf_X f= f(x_0)$ and  for all $\epsilon>0$, there exists $\delta>0$ such that $$0\leq f(x)-f(x_0)\leq \delta \Rightarrow d(x,x_0)\leq\epsilon.$$ A strong minimum is in particular unique. By $dom(f)$ we denote the domain of $f$, defined by $dom(f):=\left\{x\in X: f(x)< +\infty \right\}$. All functions in the article are supposed such that $dom(f)\neq \emptyset$.
\end{Def} 

In what follows, an element $\alpha(y,z)\in X$ will simply be noted by $yz$ and the inf-convolution of two functions $f$ and $g$ will simply be denoted by 
$$f\oplus g(x):=\inf_{yz=x}\left\{f(y)+g(z)\right\}.$$
For $a\in X$, we say that $f\oplus g(a)$ is strongly attained at $(y_0,z_0)$, if the restriction of $\eta_{f,g}$ to the set $\Delta_\alpha(a)$ has a strong minimum at $(y_0,z_0)\in \Delta_\alpha(a)$.
\begin{Thm} \label{Fond0} Let $(X,d)$ be a complete metric spaces. Let $\alpha: X\times X \rightarrow X$, be a map ($\alpha(y,z):= yz$ for all $y,z\in X\times X$). Let $f$ and $g$ be two lower semi continuous functions on $(X,d)$.  Let $a\in X$ and suppose that the map $\alpha$ is $d$-invariant at $a$. Then, the following assertions are equivalent.

\begin{itemize}
\item[$I)$] the map $x \mapsto f\oplus g(x)$ has a strong minimum at $a\in X$

\item[$II)$] there exists $(\tilde{y},\tilde{z})\in \Delta_\alpha(a)$ i.e $\tilde{y}  \tilde{z}=a$, such that :
 $f$ has a strong minimum at $\tilde{y}$ and $g$ has at strong minimum a $\tilde{z}$.
\end{itemize}
Moreover, in this case, we have
\begin{itemize}
\item[$(1)$] the restricted map $\eta_{f,g} : \Delta_\alpha(a)\rightarrow \R\cup \left\{+\infty\right\}\nonumber$ has a strong minimum at $(\tilde{y},\tilde{z})\in \Delta_\alpha(a)$ i.e $f\oplus g(a)$ is strongly attained at $(\tilde{y},\tilde{z})$.
\item[$(2)$] $f(x)-f(\tilde{y})\geq f\oplus g(x \tilde{z})-f\oplus g(a)$ and $g(x)-g(\tilde{z})\geq f\oplus g(\tilde{y}x)-f\oplus g(a)$ for all $x\in X$.
\end{itemize}
\end{Thm}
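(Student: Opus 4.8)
The plan is to extract everything from three ingredients: the elementary identity $\inf_X(f\oplus g)=\inf_X f+\inf_X g$, the uniqueness of a strong minimum, and the two-sided Lipschitz estimates supplied by $d$-invariance at $a$ (together with Proposition \ref{cont} and completeness). First I record the identity: every competitor $(y,z)\in\Delta_\alpha(x)$ gives $f(y)+g(z)\ge\inf f+\inf g$, so $f\oplus g(x)\ge\inf f+\inf g$ for all $x$; conversely $(y,z)\in\Delta_\alpha(\alpha(y,z))$ yields $\inf_X(f\oplus g)\le\inf f+\inf g$. Hence $\inf_X(f\oplus g)=\inf f+\inf g=:m$. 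I also note that a strong minimum is the \emph{unique} global minimizer: if $h:=f\oplus g$ had $h(b)=m$ with $b\neq a$, the constant sequence $x_n=b$ would violate the strong-minimum condition at $a$.

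For $I)\Rightarrow II)$, assume $h$ has a strong minimum at $a$, so $h(a)=m$. Choose $(y_n,z_n)\in\Delta_\alpha(a)$ with $f(y_n)+g(z_n)\to m$; since $f(y_n)\ge\inf f$ and $g(z_n)\ge\inf g$, both $f(y_n)\to\inf f$ and $g(z_n)\to\inf g$. The key device is a mixing trick: for any indices $n,k$ one has $h(\alpha(y_n,z_k))\le f(y_n)+g(z_k)\to m$, so the strong minimum of $h$ forces $\alpha(y_n,z_k)\to a$. Since $y_n,y_m\in\Delta_{1,\alpha}(a)$ and $z_k\in\Delta_{2,\alpha}(a)$, the lower estimate of $d$-invariance applies cleanly, $L_2\,d(y_n,y_m)\le d(\alpha(y_n,z_k),\alpha(y_m,z_k))\le d(\alpha(y_n,z_k),a)+d(a,\alpha(y_m,z_k))$, which is small for large $n,m,k$; thus $(y_n)$ is Cauchy, and symmetrically $(z_n)$ is Cauchy. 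By completeness $y_n\to\tilde y$, $z_n\to\tilde z$, and lower semicontinuity gives $f(\tilde y)\le\liminf f(y_n)=\inf f$, hence $f(\tilde y)=\inf f$ and likewise $g(\tilde z)=\inf g$. Then $h(\alpha(\tilde y,\tilde z))\le f(\tilde y)+g(\tilde z)=m$, so $\alpha(\tilde y,\tilde z)$ is a global minimizer of $h$; by uniqueness, $\alpha(\tilde y,\tilde z)=a$, i.e. $(\tilde y,\tilde z)\in\Delta_\alpha(a)$. The same mixing argument, now run \emph{inside} $\Delta_\alpha(a)$ (where both coordinates automatically lie in the projection sets), shows that $\eta_{f,g}$ restricted to $\Delta_\alpha(a)$ attains a strong minimum at $(\tilde y,\tilde z)$, which is assertion $(1)$.

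To upgrade $f(\tilde y)=\inf f$ to a \emph{strong} minimum at $\tilde y$, take any $w_n$ with $f(w_n)\to\inf f$; then $h(\alpha(w_n,\tilde z))\le f(w_n)+g(\tilde z)\to m$, so $\alpha(w_n,\tilde z)\to a=\alpha(\tilde y,\tilde z)$, and the lower bound $L_2\,d(w_n,\tilde y)\le d(\alpha(w_n,\tilde z),\alpha(\tilde y,\tilde z))\to 0$ yields $w_n\to\tilde y$; symmetrically $g$ has a strong minimum at $\tilde z$. Assertion $(2)$ is then bookkeeping: $h(\alpha(x,\tilde z))\le f(x)+g(\tilde z)=f(x)+\inf g$, and subtracting $m=f(\tilde y)+\inf g$ gives $h(x\tilde z)-h(a)\le f(x)-f(\tilde y)$, the other inequality being identical with the roles of the two coordinates exchanged. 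For the converse $II)\Rightarrow I)$, given strong minima $\tilde y,\tilde z$ with $\tilde y\tilde z=a$, the value is $h(a)=f(\tilde y)+g(\tilde z)=m=\inf_X h$. If $h(x_n)\to m$, pick $(y_n,z_n)\in\Delta_\alpha(x_n)$ with $f(y_n)+g(z_n)\le h(x_n)+1/n$; then $f(y_n)\to\inf f$ and $g(z_n)\to\inf g$, so the strong minima give $y_n\to\tilde y$, $z_n\to\tilde z$, and passing through $\alpha$ yields $x_n=\alpha(y_n,z_n)\to\alpha(\tilde y,\tilde z)=a$, so $a$ is a strong minimum of $h$.

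I expect the main obstacle to be the legitimacy of the $d$-invariance estimates: those two-sided bounds live only on $\Delta_{1,\alpha}(a)\times\Delta_{2,\alpha}(a)$. The mixing trick is designed precisely to keep one coordinate fixed inside $\Delta_\alpha(a)$, and the conclusion $\alpha(\tilde y,\tilde z)=a$ must be obtained from uniqueness of the strong minimum rather than from continuity of $\alpha$, since a priori the limit $(\tilde y,\tilde z)$ is not known to lie in $\Delta_\alpha(a)$. The two genuinely delicate points are the upgrade step (where the test points $w_n$ need not belong to $\Delta_{1,\alpha}(a)$) and the convergence $x_n\to a$ in $II)\Rightarrow I)$ (where $(y_n,z_n)\in\Delta_\alpha(x_n)$ rather than $\Delta_\alpha(a)$); in both, one must argue that the relevant estimates still apply. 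This is immediate when $\Delta_{1,\alpha}(a)=\Delta_{2,\alpha}(a)=X$, as for groups and normed spaces, and securing it in the general metric setting is the crux of the proof.
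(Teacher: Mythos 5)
Your proposal is correct and follows essentially the same route as the paper's proof: the same mixing inequality $f\oplus g(yz')+f\oplus g(y'z)\leq (f(y)+g(z))+(f(y')+g(z'))$ drives the Cauchy argument through the lower $d$-invariance bound, completeness and lower semicontinuity produce $(\tilde y,\tilde z)$, the estimates $f\oplus g(x\tilde z)\leq f(x)+g(\tilde z)$ upgrade the minima to strong minima and give assertion $(2)$, and the converse is the same decompose-and-recombine argument on a minimizing sequence of $f\oplus g$. Two of your micro-choices differ and are worth recording. First, organizing the bookkeeping around the identity $\inf_X(f\oplus g)=\inf_X f+\inf_X g$ lets you split the minimizing sequence coordinatewise from the outset, where the paper instead normalizes $f\oplus g(a)=0$ and manipulates the sums; this is cosmetic. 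Second, and more substantively, you identify $\alpha(\tilde y,\tilde z)=a$ from $f\oplus g(\alpha(\tilde y,\tilde z))\leq f(\tilde y)+g(\tilde z)=\inf_X(f\oplus g)$ together with uniqueness of the strong minimum, whereas the paper invokes the continuity of $\alpha$ on $\Delta_\alpha(a)$ (Proposition \ref{cont}) at a point $(\tilde y,\tilde z)$ that is at that stage only known to lie in the closure of $\Delta_\alpha(a)$; your argument is cleaner there. Finally, the two delicate points you flag at the end --- applying the $d$-invariance estimates to test points $w_n$ not known to lie in $\Delta_{1,\alpha}(a)$ in the upgrade step, and to pairs $(y_n,z_n)\in\Delta_\alpha(x_n)$ rather than $\Delta_\alpha(a)$ in $II)\Rightarrow I)$ --- are genuine, but the paper's own proof makes exactly the same unexamined applications of Definition \ref{def1}; they are harmless in the situations the paper actually uses later (groups and normed spaces, where $\Delta_{1,\alpha}(a)=\Delta_{2,\alpha}(a)=X$), so I do not count them against you.
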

\begin{proof} First, from the definition of the inf-convolution, for all $y, y', z, z'\in X$, 
\begin{eqnarray}
\label{cool5} f\oplus g(yz') &\leq& f(y)+g(z')\\
\label{cool6} f\oplus g(y'z)&\leq& f(y')+ g(z). 
\end{eqnarray}
 By adding both inequalilies (\ref{cool5}) and (\ref{cool6}) above we obtain 
\begin{eqnarray}\label{cool7}
f\oplus g(yz')+f\oplus g(y'z)&\leq& \left(f(y) +g(z)\right) + \left(f(y')+g(z')\right).
\end{eqnarray}
{\it $I)\Rightarrow II)$.} Replacing $f$ by $f-\frac{1}{2} f\oplus g (a)$ and $g$ by $g-\frac{1}{2} f\oplus g (a)$, we can assume without loss of generality that $f\oplus g$ has a strong minimum at $a$ and $f\oplus g (a)=0$. Let $(y_n)_n;(z_n)_n\subset X$ be such that for all $n\in \N^*$, $y_n z_n=a$ and
\begin{eqnarray} 
0=f\oplus g(a) \leq & f(y_n) + g(z_n) & <f\oplus g(a)+\frac 1 n.\nonumber
 \end{eqnarray}
in other words
 \begin{eqnarray} \label{cool2}
 0\leq & f(y_n) + g(z_n) & < \frac 1 n.
 \end{eqnarray}
By appllaying (\ref{cool7}) with $y=y_n$; $z=z_n$; $y'=y_p$ and $z'=z_p$ we have
 \begin{eqnarray} f\oplus g(y_n z_p)+f\oplus g(y_p z_n) &\leq & \left(f(y_n)+ g(z_n)\right) + \left(f(y_p)+ g(z_p)\right) \nonumber 
 \end{eqnarray} 
Using the above inequality and (\ref{cool2}) we obtain
\begin{eqnarray} \label{cool1.8} 0=2(f\oplus g(a))\leq f\oplus g(y_n z_p)+f\oplus g(y_p z_n) &\leq & \frac{1}{n}+\frac{1}{p}. 
 \end{eqnarray} 
Since $x\mapsto f\oplus g$ has a strong minimum at $a$, then  $d(x,a)\rightarrow 0$ whenever $f\oplus g (x)\rightarrow 0$. On the other hand, $f\oplus g(x) \geq f\oplus g(a)=0$ for all $x\in X$. Thus from (\ref{cool1.8}), we get that $f\oplus g(y_n z_p)\rightarrow 0$ and $f\oplus g(y_p z_n)\rightarrow 0$ when $n,p\rightarrow +\infty$. We deduce that $d(y_n z_p,a)\rightarrow 0$, when $n,p\rightarrow +\infty$. Since $y_p z_p=a$ for all $p\in \N$ and $\alpha$ is $d$-invariant at $a$, then $d(y_n,y_p)\leq \frac{1}{L_2} d(y_n z_p,y_pz_p)=\frac{1}{L_2}d(y_n z_p,a)\rightarrow 0$, when $n,p\rightarrow +\infty$. Hence $(y_n)_n$ is a Cauchy sequence and so converges to some point $\tilde{y}\in X$ since $(X,d)$ is complete metric space. Similarly, we prove that $(z_n)_n$ converges to some point $\tilde{z}\in X$. By the continuity of the map $\alpha: (y,z)\mapsto yz$ (See Proposition \ref{cont}), we deduce that $\tilde{y}  \tilde{z}=\lim_n\left(y_n z_n\right)=\lim_n\left(a\right)=a$.\\
 \\
Using the lower semi-continuity of $f$ and $g$ and the formulas (\ref{cool2}) we get 

\begin{eqnarray} f(\tilde{y})+ g(\tilde{z})&\leq& \liminf_{n\rightarrow +\infty} f(y_n)+\liminf_{n\rightarrow +\infty} g(z_n)\nonumber\\
                                    &\leq& \liminf_{n\rightarrow +\infty}\left(f(y_n)+g(z_n)\right)\leq 0=f\oplus g(a).\nonumber
\end{eqnarray}
On the other hand, it is always true that $f\oplus g(a)\leq f(\tilde{y})+g(\tilde{z})$ since $\tilde{y} \tilde{z}=a$. Thus 
\begin{eqnarray} \label{cool8} f(\tilde{y}) + g(\tilde{z}) = f\oplus g(a)=0.
\end{eqnarray}
 Using (\ref{cool8}) we obtain
\begin{eqnarray} \label{cool3} f\oplus g(\tilde{y}x)  &\leq& f(\tilde{y})+g(x)= g(x)-g(\tilde{z}). 
\end{eqnarray}
and 
\begin{eqnarray} \label{cool'3} f\oplus g(x\tilde{z})  &\leq& f(x)+g(\tilde{z}) = f(x)-f(\tilde{y}).
\end{eqnarray}
 Using (\ref{cool'3}) and the fact that $f\oplus g$ has a strong minimum at $a$, we have that $f(x)-f(\tilde{y})\geq 0$ and if $f(y_n)-f(\tilde{y})\rightarrow 0$, then $f\oplus g(y_n\tilde{z})\rightarrow 0$ which implies that $d(y_n\tilde{z},a)\rightarrow 0$ since $f\oplus g$ has a strong minimum at $a$. On the other hand we have $d(y_n,\tilde{y})\leq \frac{1}{L_2} d(y_n\tilde{z},\tilde{y}  \tilde{z})= \frac{1}{L_2}d(y_n\tilde{z},a)$ by the $d$-invariance of $\alpha$ at $a$. Thus $d(y_n,\tilde{y})\rightarrow 0$ and so $f$ has a strong minimum at $\tilde{y}$. The same argument, by using (\ref{cool3}), shows that $g$ has a strong minimum at $\tilde{z}$\\
\\
{\it $II)\Rightarrow I)$.}  We first prove that $f\oplus g$ has a minimum at $\tilde{y} \tilde{z}=a$ and that  $f(\tilde{y})+g(\tilde{z})=f\oplus g(a)$. Indeed, since $f(y)\geq f(\tilde{y})$ and $g(z)\geq g(\tilde{z})$ for all $y, z\in X$, then we get $$f\oplus g(a):=\inf_{yz=a}\left\{ f(y)+g(z)\right\}\geq f(\tilde{y})+g(\tilde{z})$$ 
On the other hand, $f\oplus g(a)\leq f(\tilde{y})+g(\tilde{z})$ since $\tilde{y} \tilde{z}=a$. Thus, $f\oplus g(a)= f(\tilde{y})+g(\tilde{z})$. On the other hand, using again the fact that $f(y)\geq f(\tilde{y})$ and $g(z)\geq g(\tilde{z})$ for all $y, z\in X$, we obtain for all $x\in X$, $$f\oplus g(x):=\inf_{yz=x}\left\{ f(y)+g(z)\right\}\geq f(\tilde{y})+g(\tilde{z})=f\oplus g(a).$$
 It follows that $f\oplus g$ has a minimum at $a=\tilde{y}  \tilde{z}$. Now, let $(x_n)_n\subset X$  be a sequence that minimize $f\oplus g$. Let $\epsilon_n\rightarrow 0^+$ such that 
\begin{eqnarray} \label{cip}
f\oplus g (a) \leq f\oplus g (x_n)\leq f\oplus g(a) +\epsilon_n
\end{eqnarray}
 From the definition of $f\oplus g (x_n)$, for each $n\in N^*$, there exists sequences $(y_n)_n, (z_n)_n\subset X$ satisfying $y_nz_n=x_n$ and 

\begin{eqnarray}  f\oplus g(x_n) -\frac{1}{n} \leq f(y_n) +g(z_n) \leq f\oplus g (x_n) +\frac{1}{n}\nonumber
\end{eqnarray}
Since $f(\tilde{y})+g(\tilde{z})=f\oplus g(a)$, it follows that 
\begin{eqnarray*} f\oplus g (x_n) -f\oplus g(a) -\frac{1}{n} & \leq & \left(f(y_n)-f(\tilde{y})\right) +\left(g(z_n)-g(\tilde{z})\right)\\
& \leq & f\oplus g (x_n)-f\oplus g(a) +\frac{1}{n}
\end{eqnarray*}
Using the inequality (\ref{cip}) we get for all $n\in \N^*$
\begin{eqnarray} -\frac{1}{n} \leq \left(f(y_n)-f(\tilde{y})\right) +\left(g(z_n)-g(\tilde{z})\right) \leq \epsilon_n +\frac{1}{n}\nonumber
\end{eqnarray}
Since $\left(f(y_n)-f(\tilde{y})\right)\geq 0$ and $\left(g(z_n)-g(\tilde{z})\right)\geq 0$, we get that 
\begin{eqnarray*} 0 \leq f(y_n)-f(\tilde{y}) &\leq &\left(f(y_n)-f(\tilde{y})\right) +\left(g(z_n)-g(\tilde{z})\right)\\
& \leq &\epsilon_n +\frac{1}{n}
\end{eqnarray*}
and 
\begin{eqnarray} 0 \leq g(z_n)-g(\tilde{z}) \leq \left(f(y_n)-f(\tilde{y})\right) +\left(g(z_n)-g(\tilde{z})\right) \leq \epsilon_n +\frac{1}{n}\nonumber
\end{eqnarray}
Sending $n$ to $+\infty$, we have $\lim_{n\rightarrow +\infty}f(y_n)= f(\tilde{y})$ and $\lim_{n\rightarrow +\infty}g(z_n)= g(\tilde{z})$. Since $f$ and $g$ has respectively a strong minimum at $\tilde{y}$ and $\tilde{z}$, we deduce that $\lim_{n\rightarrow +\infty} y_n=y$ and $\lim_{n\rightarrow +\infty} z_n=z$. By the continuity of the map $\alpha :(y,z)\mapsto yz$ on $\Delta_\alpha(a)$, we obtain $\lim_{n\rightarrow +\infty}  (y_nz_n)=\tilde{y}  \tilde{z}=a$. Since $y_nz_n=x_n$ for all $n\in \N^*$, we have $\lim_{n\rightarrow +\infty} x_n=a.$ Thus $f\oplus g$ has a  strong minimum at $a$\\
\\
Moreover, we have the additional informations:
\begin{itemize}
\item[$(1)$] $f\oplus g(a)$ is strongly attained at $(\tilde{y},\tilde{z})$ (We assume as in $I)$ that $f\oplus g(a)=0$). We know from (\ref{cool8}) that $f\oplus g(a)$ is attained at $(\tilde{y},\tilde{z})$. To see that $\eta_{f,g} $ has in fact a strong minimum at $(\tilde{y},\tilde{z})$, let $((y_n,z_n))_n\subset \Delta_\alpha(a)$ be any sequence such that 
\begin{eqnarray} \label{cool.20}
f(y_n)+g(z_n):=\eta_{f,g}(y_n,z_n)\rightarrow \inf_{yz=a }\left\{f(y)+g(z)\right\}=\eta_{f,g}(\tilde{y},\tilde{z})=0.
\end{eqnarray}
 By applying (\ref{cool7}) with $y=\tilde{y}$, $y'=y_n$, $z=\tilde{z}$ and $z'=z_n$ and the formulas (\ref{cool8}) and (\ref{cool2}),  we obtain 
\begin{eqnarray} \label{cool.9} 0\leq f\oplus g(\tilde{y} z_n)+f\oplus g(y_n \tilde{z}) &\leq& \left(f(\tilde{y}) +g(\tilde{z})\right) + \left(f(y_n)+g(z_n)\right)\nonumber\\
                                                                                             &=& \left(f(y_n)+g(z_n)\right)
\end{eqnarray}
Thus $f\oplus g(\tilde{y} z_n)\rightarrow 0$ (and also $f\oplus g(y_n \tilde{z})\rightarrow 0$) from  (\ref{cool.20}) and (\ref{cool.9}) and the fact that $f\oplus g (x)\geq 0=f\oplus g (a)$, for all $x\in X$. It follows that $d(\tilde{y} z_n,a)\rightarrow 0$ and $d(y_n \tilde{z},a)\rightarrow 0$, since $f\oplus g$ has a strong minimum at $a$. Hence, $d(y_n,\tilde{y})\rightarrow 0$ since $d(y_n,\tilde{y})\leq \frac{1}{L_2}d(y_n \tilde{z},\tilde{y}  \tilde{z})=\frac{1}{L_2}d(y_n \tilde{z},a)$ by the $d$-invariance of $\alpha$ at $a$, and the fact that $\tilde{y} \tilde{z}=a$ . In a similar way we have $d(z_n,\tilde{z})\rightarrow 0$. 
 Thus $(\tilde{y},\tilde{z})$ is a strong minimum of $\eta_{f,g}$.
\item[$(2)$] This part follows from (\ref{cool3}) and (\ref{cool'3})
 \end{itemize}
 \end{proof}
 \section{The monoids structure for the inf-convolution.}
 The following corollary will permit to describe the group of unit of submonoids, for the inf-convolution structure, of the set $Lip^1(X)$ of all $1$-Lipschitz and bounded from below functions.
 \begin{Cor} \label{lips} Let $(X,.,d)$ be a group complete metric invariant having $e$ as identity element. Let $f$ and $g$ be two $1$-Lipschitz functions on $X$. Then, the following assertions are equivalent. 
 \begin{itemize}
 \item[$(1)$] $f\oplus g =d(e,.)$.
 \item[$(2)$] there exists $\tilde{y}\in X$ and $c\in \R$ such that  
 $$f(.)=d(\tilde{y},.)+c:=\gamma(\tilde{y})+c$$
 and 
 $$g(.)=d(\tilde{y}^{-1},.)-c=\gamma(\tilde{y}^{-1})-c.$$
 \end{itemize}
 \end{Cor}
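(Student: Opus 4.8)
The plan is to read the hypothesis $f\oplus g = d(e,\cdot)$ as the statement that $f\oplus g$ has a strong minimum at the identity $e$, and then to feed this into the machinery already available. Observe first that $d(e,\cdot)=\gamma(e)$ has a strong minimum at $e$ (with value $0$, since $d(e,x)\le\epsilon\Rightarrow d(x,e)\le\epsilon$). Next, on a group complete metric invariant the law $\alpha(y,z)=yz$ is $d$-invariant at every point, with constants $L_1=L_2=L'_1=L'_2=1$ (this is Example $3$ of the excerpt), and the $1$-Lipschitz functions $f,g$ are in particular lower semicontinuous. Hence Theorem \ref{Fond1} and the refined conclusions of Theorem \ref{Fond0} apply at $a=e$. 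The whole proof is organized around this single reduction, the two sharp inequalities of conclusion $(2)$ of Theorem \ref{Fond0}, and the $1$-Lipschitz property.

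For $(1)\Rightarrow(2)$, I would apply $I)\Rightarrow II)$ of Theorem \ref{Fond1} to obtain a pair $(\tilde y,\tilde z)$ with $\tilde y\tilde z=e$, i.e.\ $\tilde z=\tilde y^{-1}$, such that $f$ has a strong minimum at $\tilde y$ and $g$ a strong minimum at $\tilde y^{-1}$. Set $c:=f(\tilde y)$. The strong attainment in conclusion $(1)$ of Theorem \ref{Fond0} gives $f(\tilde y)+g(\tilde y^{-1})=f\oplus g(e)=d(e,e)=0$, whence $g(\tilde y^{-1})=-c$. Now I would insert $f\oplus g=d(e,\cdot)$ into the first inequality of conclusion $(2)$: for all $x$, $f(x)-c\ge f\oplus g(x\tilde y^{-1})=d(e,x\tilde y^{-1})=d(\tilde y,x)$, where the last equality is right $d$-invariance (multiply both arguments by $\tilde y$). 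This yields $f(x)\ge \gamma(\tilde y)(x)+c$; the reverse inequality $f(x)-f(\tilde y)\le d(x,\tilde y)$ is exactly $1$-Lipschitzness, so $f=\gamma(\tilde y)+c$. The symmetric computation using the second inequality of $(2)$ together with left $d$-invariance, $d(e,\tilde y x)=d(\tilde y^{-1},x)$, gives $g(x)\ge\gamma(\tilde y^{-1})(x)-c$, and Lipschitzness again closes the sandwich to produce $g=\gamma(\tilde y^{-1})-c$.

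For the converse $(2)\Rightarrow(1)$ I would argue directly, without the theorem. Writing $f=\gamma(\tilde y)+c$ and $g=\gamma(\tilde y^{-1})-c$, the constant cancels and $f\oplus g(x)=\inf_{yz=x}\{d(\tilde y,y)+d(\tilde y^{-1},z)\}$. For the lower bound, for any $yz=x$ invariance gives $d(\tilde y,y)=d(\tilde y z,x)$ and $d(\tilde y^{-1},z)=d(e,\tilde y z)$, so the sum is $d(e,\tilde y z)+d(\tilde y z,x)\ge d(e,x)$ by the triangle inequality; hence $f\oplus g\ge d(e,\cdot)$. For the matching upper bound, I would take the explicit decomposition $y=\tilde y$, $z=\tilde y^{-1}x$, which gives $d(\tilde y,\tilde y)+d(\tilde y^{-1},\tilde y^{-1}x)=0+d(e,x)=d(e,x)$ by left invariance. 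Together these force $f\oplus g=d(e,\cdot)$.

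The genuinely substantive input is Theorem \ref{Fond0}; once it is granted, there is no remaining analytic difficulty, since the strong-minimum localization plus the inequality of conclusion $(2)$ do all the heavy lifting. I expect the only real care to be \emph{bookkeeping}: tracking the additive constant $c$ (and verifying $g(\tilde y^{-1})=-c$ from strong attainment), and applying $d$-invariance on the correct side in each of the two rewritings $d(e,x\tilde y^{-1})=d(\tilde y,x)$ and $d(e,\tilde y x)=d(\tilde y^{-1},x)$. These are exactly the places where a sign or a left/right slip would break the argument, so I would state the invariance identities explicitly before invoking them.
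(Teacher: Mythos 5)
Your proposal is correct and follows essentially the same route as the paper: the forward direction is exactly the paper's argument (apply Theorem \ref{Fond0} at $a=e$, use conclusion $(2)$ together with the invariance identities $d(e,x\tilde y^{-1})=d(\tilde y,x)$ and $d(e,\tilde y x)=d(\tilde y^{-1},x)$, and close the sandwich with $1$-Lipschitzness). For the converse the paper simply cites Proposition \ref{int} (i.e.\ $\gamma(\tilde y)\oplus\gamma(\tilde y^{-1})=\gamma(e)$), whose underlying computation in Lemma \ref{loop} is precisely the triangle-inequality lower bound and explicit decomposition $y=\tilde y$, $z=\tilde y^{-1}x$ that you carry out by hand.
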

\begin{proof} $\bullet (1)\Rightarrow (2)$. Since the law $.$ is in particular $d$-invariant at $e$ and the map $d(e,.)$ has a strong minimum at $e$, by applying Theorem \ref{Fond0}, there exists $\tilde{y}, \tilde{z}\in X$ such that $\tilde{y}\tilde{z}=e$, $f(\tilde{y}) +g(\tilde{z})= f\oplus g(e)=0$ and $f(x)-f(\tilde{y})\geq d(e,x\tilde{z})=d(\tilde{y},x)$ and $g(x)-g(\tilde{z})\geq d(e,\tilde{y}x)=d(\tilde{y}^{-1},x)$, for all $x\in X.$ On the other hand, since $f$ and $g$ are $1$-Lipschitz, we have $f(x)-f(\tilde{y})\leq d(\tilde{y},x)$ and $g(x)-g(\tilde{z})\leq d(\tilde{z},x)=d(\tilde{y}^{-1},x)$ , for all $x\in X.$ Thus, $f(x)-f(\tilde{y})=d(\tilde{y},x)$ and $g(x)-g(\tilde{y}^{-1})=g(x)-g(\tilde{z})=d(\tilde{y}^{-1},x)$ , for all $x\in X.$ In other words, $f(.)=d(\tilde{y},.)+c:=\gamma(\tilde{y})+c$ and $g(.)=d(\tilde{y}^{-1},.)-c=\gamma(\tilde{y}^{-1})-c$, with $c=f(\tilde{y})=-g(\tilde{z})$.

$\bullet (2)\Rightarrow (1)$. Suppose that $(2)$ hold, then  $f\oplus g(x)=\left(\gamma(\tilde{y})\oplus \gamma(\tilde{y}^{-1})\right)(x)$ for all $x\in X.$ Since $(X,.,d)$ is group complete metric invariant, by using Proposition \ref{int}. we get $f\oplus g=\gamma(e):=d(e,.)$
\end{proof}

\begin{Lem} \label{loop} Let $(X,.,d)$ be a metric space and $. : (y,z)\mapsto yz$ be a law of composition of $X$.
\begin{itemize} 
\item[$1)$] Suppose that $d(yx,zx)\leq d(y,z)$ and $d(xy,xz)\leq d(y,z)$, for all $x, y, z\in X$. Then we have, for all $a,b \in X$  $$\gamma(ab)\leq \gamma(a)\oplus \gamma(b).$$
\item[$2)$] Suppose $d(yx,zx)=d(xy,xz)=d(y,z)$, for all $x, y, z\in X$. Then, we have for all $x\in X$ and all $a,b \in X$ 
$$\left(\gamma(a)\oplus\gamma(b)\right)(xb)= \gamma(ab)(xb)$$
and 
$$\left(\gamma(a)\oplus\gamma(b)\right)(ax)=\gamma(ab)(ax).$$
If moreover $X$ is quasigroup, then we have for all $a,b \in X$ $$\gamma(a)\oplus\gamma(b)= \gamma(ab).$$
\end{itemize}
\end{Lem}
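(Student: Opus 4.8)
The plan is to treat the two parts separately, both reducing to the triangle inequality combined with the one-sided (non-)expansiveness hypotheses; the quasigroup conclusion will then follow from a surjectivity remark. Throughout I use that $\gamma(a)(t)=d(a,t)$ and that $(\gamma(a)\oplus\gamma(b))(x)=\inf_{yz=x}\{d(a,y)+d(b,z)\}$.

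For part $1)$, I would fix $a,b\in X$ and an arbitrary point $x$, and bound $\gamma(ab)(x)=d(ab,x)$ by $d(a,y)+d(b,z)$ for every decomposition $x=yz$. The key is to route the triangle inequality through the intermediate point $az$:
$$d(ab,yz)\le d(ab,az)+d(az,yz).$$
The first term is controlled by left non-expansiveness (taking the left multiplier to be $a$), giving $d(ab,az)\le d(b,z)$, and the second by right non-expansiveness (right multiplier $z$), giving $d(az,yz)\le d(a,y)$. Hence $d(ab,yz)\le d(a,y)+d(b,z)$ for every $(y,z)$ with $yz=x$, and taking the infimum over such pairs yields $\gamma(ab)(x)\le(\gamma(a)\oplus\gamma(b))(x)$. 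Choosing that intermediate point correctly is essentially the only subtle step here.

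For part $2)$, exact invariance is stronger than the hypothesis of part $1)$, so the inequality $\gamma(ab)\le\gamma(a)\oplus\gamma(b)$ is already available and it remains to produce the reverse inequality at the prescribed points. At a point of the form $xb$ I would feed the admissible decomposition $(y,z)=(x,b)$ into the infimum, obtaining
$$(\gamma(a)\oplus\gamma(b))(xb)\le\gamma(a)(x)+\gamma(b)(b)=d(a,x),$$
since $d(b,b)=0$; right invariance then identifies $d(a,x)=d(ab,xb)=\gamma(ab)(xb)$, so the two sides coincide. The argument at a point $ax$ is the mirror image, using the decomposition $(a,x)$, the vanishing term $\gamma(a)(a)=0$, and left invariance $d(ab,ax)=d(b,x)$.

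Finally, for the quasigroup case I would observe that the equation $t\,b=w$ has a (unique) solution $t$ for every $w\in X$, so every element of $X$ is of the form $xb$. The pointwise identity just established at all points $xb$ therefore holds at every point of $X$, giving $\gamma(a)\oplus\gamma(b)=\gamma(ab)$ everywhere. I expect no genuine obstacle beyond the initial choice of intermediate point; the whole lemma is careful bookkeeping of one- versus two-sided invariance together with the trivial observation $d(a,a)=0$.
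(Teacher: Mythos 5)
Your proposal is correct and follows essentially the same route as the paper: part $1)$ via the triangle inequality through the intermediate point $az$ together with the two one-sided non-expansiveness hypotheses, part $2)$ via the test decompositions $(x,b)$ and $(a,x)$ combined with part $1)$, and the quasigroup conclusion via surjectivity of right translation. No discrepancies to report.
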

\begin{proof}
$1)$ Let $a,b , x \in X$, then we have
\begin{eqnarray}
\gamma(a)\oplus\gamma(b)(x):=\inf_{yz=x}\left\{d(a,y) +d(b,z)\right\} &\geq &\inf_{yz=x}\left\{d(az,yz)+d(ab,az)\right\}  \nonumber\\
                                               &\geq &\inf_{yz=x} d(ab,yz) \nonumber\\
                                               &= & d(ab,x):=\gamma(ab)(x)\nonumber
\end{eqnarray}
$2)$ Using the metric invariance, we have for all $x\in X$ and all $a,b \in X$
\begin{eqnarray} \left(\gamma(a)\oplus\gamma(b)\right)(xb) &:=&\inf_{yz=xb}\left\{d(a,y) +d(b,z)\right\}\nonumber\\
                                                          &\leq & d(a,x)\nonumber\\
                                                          &=& d(ab,xb)\nonumber\\
                                                          &:=& \gamma(ab)(xb)\nonumber
\end{eqnarray}
Combining this inequality with the part $1)$, we get $\left(\gamma(a)\oplus\gamma(b)\right)(xb)=\gamma(ab)(xb)$. In a similar way, we prove $\left(\gamma(a)\oplus\gamma(b)\right)(ax)=\gamma(ab)(ax)$. If moreover, $X$ is a quasigroup, then for each $t, b\in X$, there exists $x\in X$ such that $t=xb$. So we obtain $\gamma(a)\oplus\gamma(b)= \gamma(ab)$
\end{proof}

We give  the proof of Proposition \ref{int} mentioned in the introduction.

\begin{proof}[{\bf Proof of Proposition \ref{int}.}] $(1)\Rightarrow (2)$. Suppose that $(X,\alpha)$ is quasigroup. Using Lemma \ref{loop}, we have that $\gamma(a)\oplus \gamma(b)=\gamma(ab)$, for all $a,b \in X$. Using this formula and the injectivity of $\gamma$ , it is clear that $(\gamma(X), \oplus)$ is quasigroup (respectively, loop, group, commutative group) whenever $(X,.)$ is quasigroup (respectively, loop, group, commutative group).\\
The last part of the theorem, follows from the formula $\gamma(a)\oplus \gamma(b)=\gamma(ab)$ and the fact that $\gamma$ is isometric.\\
$(1)\Rightarrow (2)$.  Suppose that $(\gamma(X), \oplus)$ is a quasigroup. Let us prove that $(X,.)$ is quasigroup. First, we show that for all $a,b \in X$, we have that $\gamma(a)\oplus \gamma(b)=\gamma(ab)$. Indeed, let $a, b\in X$. Since $\oplus$ is an internal law of $(\gamma(X), \oplus)$, then there exists $c\in X$ such that $\gamma(a)\oplus \gamma(b)=\gamma(c)$. Using Lemma \ref{loop}, we obtain $\gamma(ab)\leq\gamma(c)$. Hence $0\leq d(ab,c)=\gamma(ab)(c)\leq\gamma(c)(c)=0$. This implies that $c=ab$. Finally we have $\gamma(a)\oplus \gamma(b)=\gamma(ab)$ for $a, b\in X$. From this formula and the injectivity of $\gamma$, it is clear that $(X,.)$ is quasigroup (respectively, loop, group, commutative group) whenever $(\gamma(X), \oplus)$ is quasigroup (respectively, loop, group, commutative group).
\end{proof}

The following Corollary is a particular case of the work established in \cite{Ba2}.

\begin{Cor} \label{3} Let $(X,.,d)$ be a group complete metric invariant having $e$ as identity element. Then,
\begin{itemize}
\item[$(1)$] the set $Lip^1(X)$ of all $1$-Lipschitz and bounded from below functions, is a monoid having $\gamma(e):=d(e,.)$ as identity element and its group of unit $\mathcal{U}(Lip^1(X))$ coincides with $\hat{X}+\R$. 
\item[$(2)$] the set $Lip^1_+(X)$ of all $1$-Lipschitz and positive functions, is a monoid having $\gamma(e):=d(e,.)$ as identity element and its group of unit $\mathcal{U}(Lip^1_+(X))$ coincides with $\hat{X}$.
\end{itemize}
 \end{Cor}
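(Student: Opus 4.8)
The plan is to verify the three monoid axioms for $\oplus$ on each of $Lip^1(X)$ and $Lip^1_+(X)$ --- that $\oplus$ is an internal law, that it is associative, and that $\gamma(e)$ is a two-sided identity --- and then to read off the group of units directly from Corollary \ref{lips} together with Proposition \ref{int}.

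First I would check that $\oplus$ stays inside the class. If $f,g$ are bounded below then $f\oplus g(x)\geq \inf f+\inf g>-\infty$, and if $f,g\geq 0$ then clearly $f\oplus g\geq 0$; so boundedness below and positivity are preserved. For the $1$-Lipschitz property I would use metric invariance: given $x,x'\in X$ and any factorization $yz=x$, set $y'=y$ and $z'=y^{-1}x'$, so that $y'z'=x'$ and, by left invariance, $d(z,z')=d(y^{-1}x,y^{-1}x')=d(x,x')$; hence $f(y')+g(z')\leq f(y)+g(z)+d(x,x')$, and taking the infimum over factorizations of $x$ gives $f\oplus g(x')\leq f\oplus g(x)+d(x,x')$. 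Exchanging $x$ and $x'$ yields $|f\oplus g(x)-f\oplus g(x')|\leq d(x,x')$. Associativity is purely algebraic: unfolding the nested infima, both $(f\oplus g)\oplus h(x)$ and $f\oplus(g\oplus h)(x)$ equal $\inf\{f(y)+g(z)+h(v):\, (yz)v=x\}=\inf\{f(y)+g(z)+h(v):\, y(zv)=x\}$, the two constraint sets coinciding because the group law is associative.

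Next I would verify that $\gamma(e)$, which lies in $Lip^1_+(X)\subset Lip^1(X)$, is the identity. Choosing $y=x,z=e$ in $f\oplus\gamma(e)(x)=\inf_{yz=x}\{f(y)+d(e,z)\}$ gives $\leq f(x)$; for the reverse inequality I would use that $f$ is $1$-Lipschitz together with left invariance: for any $yz=x$ one has $d(e,z)=d(e,y^{-1}x)=d(y,x)$ and $f(y)\geq f(x)-d(x,y)$, so $f(y)+d(e,z)\geq f(x)$. Thus $f\oplus\gamma(e)=f$, and the symmetric computation using right invariance gives $\gamma(e)\oplus f=f$. Here the $1$-Lipschitz hypothesis is essential --- it is exactly what makes $\gamma(e)$ neutral --- so this is the step where the choice of function class really matters.

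Finally, for the group of units I would argue as follows. The identity $(\gamma(a)+c)\oplus(\gamma(a^{-1})-c)=\gamma(a)\oplus\gamma(a^{-1})+c-c=\gamma(aa^{-1})=\gamma(e)$, where the middle equality is the additive-constant behavior of $\oplus$ and the last is Proposition \ref{int} (equivalently Lemma \ref{loop}), together with the symmetric product, shows $\hat{X}+\R\subseteq\mathcal{U}(Lip^1(X))$ and, restricting to $c=0$, $\hat{X}\subseteq\mathcal{U}(Lip^1_+(X))$. Conversely, if $f\in Lip^1(X)$ is invertible with inverse $g$, then $f\oplus g=\gamma(e)=d(e,.)$, so Corollary \ref{lips} forces $f=\gamma(\tilde{y})+c$ for some $\tilde{y}\in X$ and $c\in\R$; hence $\mathcal{U}(Lip^1(X))=\hat{X}+\R$. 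In the positive case the same corollary gives $f=\gamma(\tilde{y})+c$ and $g=\gamma(\tilde{y}^{-1})-c$; evaluating $f\geq 0$ at $\tilde{y}$ yields $c\geq 0$ and evaluating $g\geq 0$ at $\tilde{y}^{-1}$ yields $-c\geq 0$, so $c=0$ and $f=\gamma(\tilde{y})\in\hat{X}$, giving $\mathcal{U}(Lip^1_+(X))=\hat{X}$. The only genuine obstacle in the whole argument is the units characterization, but it is already delivered by Corollary \ref{lips}; the remaining work is the routine bookkeeping above.
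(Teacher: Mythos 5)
Your proof is correct and follows essentially the same route as the paper: both inclusions for the group of units come from Proposition \ref{int} (equivalently Lemma \ref{loop}) in one direction and Corollary \ref{lips} in the other, with positivity evaluated at $\tilde{y}$ and $\tilde{y}^{-1}$ to force $c=0$ in part $(2)$. The only difference is that you verify the monoid axioms (internal law, associativity, neutrality of $\gamma(e)$) directly, whereas the paper delegates that part to Proposition 7 and Lemma 3 of \cite{Ba2}; your verification is accurate and makes the argument self-contained.
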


\begin{proof}

$(1)$ The fact that $Lip^1(X)$ is a monoid having $\gamma(e):=d(e,.)$ as identity element, follows from Proposition 7. and Lemma 3. in \cite{Ba2}. Using Proposition \ref{int}, we have that $\hat{X}+\R \subset \mathcal{U}(Lip^1(X))$. The fact that $\mathcal{U}(Lip^1(X))\subset \hat{X}+\R$, follows from Corollary \ref{lips}. Thus $\mathcal{U}(Lip^1(X))= \hat{X}+\R$.

$(2)$ Since the inf-convolution of positive functions is also positive and $\gamma(e):=d(e,.)\in Lip^1_+(X)$, then $Lip^1_+(X)$ is a submonoid of $Lip^1(X)$. On the other hand, $\hat{X}\subset \mathcal{U}(Lip^1_+(X))\subset \mathcal{U}(Lip^1(X))=\hat{X}+\R$. Since the element of $\mathcal{U}(Lip^1_+(X))$ are positive functions we get $\mathcal{U}(Lip^1_+(X))=\hat{X}$.
\end{proof}

We give now the proof of Theorem \ref{int2} mentioned in the introduction.

\begin{proof}[{\bf Proof of Theorem \ref{int2}.}] The part  $(1)\Rightarrow (2)$ can be deduced from \cite{Ba2} (See also \cite{Ba1}). Let us prove $(2)\Rightarrow (1)$. Since $(Lip^1_+(X),\oplus)$ is a (commutative) monoid, there exists and identity element $f_0\in Lip^1_+(X)$. Since $f_0$ is the identity element, it satisfies in particular:  $\gamma(a)\oplus f_0=f_0\oplus\gamma(a)=\gamma(a)$ for all $a\in X$. Since  $\gamma(a):=d(a,.)$ has a strong minimum at $a$, applying Theorem \ref{Fond0} to the functions $\gamma(a)$ and $f_0$ and $\gamma(a)\oplus f_0$, there exists $(\tilde{y},\tilde{z})\in X\times X$ satisfying $\tilde{y}  \tilde{z}=a$ such that $\gamma(a)$ has a strong minimum at $\tilde{y}$ and $f_0$ has a strong minimum at $\tilde{z}$. Since a strong minimum is in particular unique, then $\tilde{y}=a$. So, we have $a \tilde{z}=a$, for all $a\in X$. In a similar way we prove that $\tilde{z} a=a$ , for all $a\in X$. Thus $e:=\tilde{z}$ is the identity element of $X$. From the associativity of $(Lip^1_+(X),\oplus)$, we obtain in particular the associativity of $(\hat{X},\oplus)$. Since $(X,.)$ is a quasigroup then from Lemma \ref{loop}, we have $\gamma(a)\oplus \gamma(b)= \gamma(ab)$ for all $a,b\in X$, so we deduce by the injectivity of $\gamma$, that $(X,.)$ is also associative. Hence, $(X,.)$ is a (commutative) group. The fact that, the identity element of  $(Lip^1_+(X),\oplus)$ is $\gamma(e)$ where $e$ is the identity element of $X$ and its group of unit is $\hat{X}$, follows from Corllary \ref{3}.
\end{proof}
  
\section{Metric properties and the density of $S(X)$ in $Lip^1(X)$.}
Let us consider the following sets $$S(X):=\left\{f\in Lip^1(X)/ \hspace{2mm}f  \hspace{2mm} \textnormal{has a strong minimum}\right\}$$  
$$S_+(X):=\left\{f\in Lip^1_+(X)/ \hspace{2mm}f  \hspace{2mm} \textnormal{has a strong minimum}\right\}$$
 \begin{Cor} Let $(X,.,d)$ be a group complete metric invariant having $e$ as identity element. Then, $S(X)$ is a submonoid of $(Lip^1(X),\oplus)$ and $\mathcal{U}(S(X))=\hat{X}+\R$. On the other hand
 $S_+(X)$ is a submonoid of $(Lip^1_+(X),\oplus)$ and $\mathcal{U}(S_+(X))=\hat{X}$.
 \end{Cor}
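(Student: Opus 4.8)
The plan is to handle both halves of the statement in parallel: first show that $S(X)$ and $S_+(X)$ are submonoids by verifying that the common identity $\gamma(e)$ belongs to each and that each is stable under $\oplus$, and then determine the group of units by a two-sided (``sandwich'') inclusion between the Kuratowski-type candidates and the units of the ambient monoids $Lip^1(X)$ and $Lip^1_+(X)$ already computed in Corollary \ref{3}. The essential analytic input is Theorem \ref{Fond0}, which transfers strong minima through the inf-convolution; everything else is formal.

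For the submonoid property, the only non-routine point is closure under $\oplus$, and this is exactly where Theorem \ref{Fond0} applies. First I would check its hypotheses hold at every point of $X$: metric invariance makes the group law $d$-invariant at each $a\in X$ in the sense of Definition \ref{def1} (all constants equal to $1$); every $1$-Lipschitz function is lower semicontinuous; and $(X,d)$ is complete. Hence, given $f,g\in S(X)$ with strong minima at $\tilde y$ and $\tilde z$, the implication $II)\Rightarrow I)$ of Theorem \ref{Fond0} gives that $f\oplus g$ has a strong minimum (at $\tilde y\tilde z$). Since $Lip^1(X)$ is a monoid by Corollary \ref{3}, $f\oplus g$ stays $1$-Lipschitz and bounded below, so $f\oplus g\in S(X)$; and $\gamma(e)=d(e,\cdot)$ belongs to $S(X)$ because it is $1$-Lipschitz with a strong minimum at $e$. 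The argument for $S_+(X)$ is identical, adding only that the inf-convolution of two non-negative functions is non-negative, so positivity is preserved.

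For the units I would argue the two inclusions separately. For $\hat X+\R\subseteq\mathcal{U}(S(X))$, take $\gamma(a)+c$: it lies in $S(X)$ since $\gamma(a)$ has a strong minimum at $a$, its candidate inverse $\gamma(a^{-1})-c$ lies in $S(X)$ for the same reason, and by Proposition \ref{int} one has $\gamma(a)\oplus\gamma(a^{-1})=\gamma(aa^{-1})=\gamma(e)$, so the two are mutually inverse and $\gamma(a)+c\in\mathcal{U}(S(X))$. Conversely, any $f\in\mathcal{U}(S(X))$ has its inverse inside $S(X)\subseteq Lip^1(X)$, so $f$ is already a unit of $Lip^1(X)$; thus $\mathcal{U}(S(X))\subseteq\mathcal{U}(Lip^1(X))=\hat X+\R$ by Corollary \ref{3}. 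Combining the two inclusions gives $\mathcal{U}(S(X))=\hat X+\R$. The equality $\mathcal{U}(S_+(X))=\hat X$ is the same sandwich, now between $\hat X$ and $\mathcal{U}(Lip^1_+(X))=\hat X$, using that $\gamma(a),\gamma(a^{-1})\in S_+(X)$.

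I do not expect a genuine obstacle: the real work was isolated in Theorem \ref{Fond0} and in Corollary \ref{3}, and what remains is bookkeeping. The two points to state carefully are the inclusion $\mathcal{U}(S(X))\subseteq\mathcal{U}(Lip^1(X))$ -- which holds precisely because $S(X)$ is a submonoid sharing the identity $\gamma(e)$, so an inverse computed inside $S(X)$ also serves as an inverse in $Lip^1(X)$ -- and the verification that the translates $\gamma(a)+c$ genuinely possess strong minima, which is immediate from the definition of strong minimum applied to $d(a,\cdot)$.
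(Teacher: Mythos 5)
Your proposal is correct and follows essentially the same route as the paper: closure of $S(X)$ under $\oplus$ via Theorem \ref{Fond0}, and the group of units via the sandwich $\hat{X}+\R\subset\mathcal{U}(S(X))\subset\mathcal{U}(Lip^1(X))=\hat{X}+\R$ from Corollary \ref{3} (and likewise for $S_+(X)$). You simply spell out more of the details (verifying the $d$-invariance hypotheses, the strong minima of the translates $\gamma(a)+c$, and why a unit of the submonoid is a unit of the ambient monoid) than the paper does.
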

 
\begin{proof} 

Since $(Lip^1(X),\oplus)$ is a monoid having $\gamma(e)\in S(X)$ as identity element and since $S(X)$ is a subset of $(Lip^1_+(X),\oplus)$, it suffices to show that $\oplus$ is an internal law of $S(X)$ which is the case thanks to Theorem \ref{Fond0}. On the other hand, $\hat{X}+\R\subset \mathcal{U}(S(X))\subset \mathcal{U}(Lip^1(X))=\hat{X}+\R$. Hence $\mathcal{U}(S(X))=\hat{X}+\R$. In a similar way we obtain the second part of the Corollary.
\end{proof}

Consider now the metrics $\rho$ and $\tilde{\rho}$ on  $Lip^1(X)$ defined for $f,g\in Lip^1(X)$ by 
$$\rho(f,g)= \sup_{x\in X} \frac{|f(x)-g(x)|}{1+|f(x)-g(x)|}$$
$$\tilde{\rho}(f,g)= \rho(f-\inf_X f,g-\inf_X g) +|\inf_X f- \inf_X g|.$$
\begin{Prop} Let $(X,d)$ be a complete metric space. Then,
\begin{itemize}
\item[$(1)$] the sets $(Lip^1(X),\rho)$ and $(Lip^1(X),\tilde{\rho})$ (respectively, $(Lip^1_+(X),\rho)$ and $(Lip^1_+(X),\tilde{\rho})$) are complete metric spaces.
\item[$(2)$] the set $(S(X),\rho)$ is dense in $(Lip^1(X),\rho)$ and $(S(X),\tilde{\rho})$ is dense in $(Lip^1(X),\tilde{\rho})$.
\item[$(3)$] the set $(S_+(X),\rho)$ is dense in $(Lip^1_+(X),\rho)$ and $(S_+(X),\tilde{\rho})$ is dense in $(Lip^1_+(X),\tilde{\rho})$.
\end{itemize}
\end{Prop}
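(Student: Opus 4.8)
The plan is to dispatch part (1) by a routine uniform-limit argument and to prove parts (2)--(3) by exhibiting a single explicit bounded perturbation that manufactures a strong minimum.

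For part (1), I would first observe that, writing $M(f,g):=\sup_{x\in X}|f(x)-g(x)|\in[0,+\infty]$, the monotonicity and continuity of $t\mapsto t/(1+t)$ give $\rho(f,g)=M(f,g)/(1+M(f,g))$. Hence $\rho$ metrizes the topology of uniform convergence, and a sequence is $\rho$-Cauchy if and only if it is uniformly Cauchy. Given a $\rho$-Cauchy sequence $(f_n)$ in $Lip^1(X)$, choosing $N$ with $M(f_n,f_m)\le 1$ for $n,m\ge N$ shows that $(f_n(x))_n$ is Cauchy in $\R$ uniformly in $x$, so it converges uniformly to some $f$. This $f$ is $1$-Lipschitz as a pointwise limit of $1$-Lipschitz functions, and bounded below since $f\ge f_N-1\ge \inf_X f_N-1>-\infty$; thus $f\in Lip^1(X)$ and $\rho(f_n,f)\to0$. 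The same argument, with $f\ge0$ preserved under uniform limits, handles $Lip^1_+(X)$. For $\tilde\rho$ I would split a $\tilde\rho$-Cauchy sequence into the two data it controls: the normalized functions $\bar f_n:=f_n-\inf_X f_n$, which form a $\rho$-Cauchy sequence, and the reals $\inf_X f_n$, which form a Cauchy sequence in $\R$. By the previous step $\bar f_n\to\bar f$ uniformly with $\inf_X\bar f=0$ (the infimum is continuous under uniform convergence) and $\inf_X f_n\to c$; then $f:=\bar f+c$ lies in $Lip^1(X)$ (resp. $Lip^1_+(X)$, as $c\ge0$ there) and $\tilde\rho(f_n,f)\to0$.

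For parts (2) and (3) it suffices, since $\rho(f,g)\le M(f,g)$ and $\tilde\rho(f,g)\le M(f,g)+2|\inf_X f-\inf_X g|$, to produce for every $f$ and every $\epsilon\in(0,1)$ a function $g$ with a strong minimum, $M(f,g)\le\epsilon$, and $|\inf_X f-\inf_X g|=\epsilon^2$. Set $m:=\inf_X f$, pick $x_0$ with $f(x_0)\le m+\epsilon^2$, and define
\[
g(x):=\min\bigl(f(x)+\epsilon,\ m+\epsilon^2+d(x,x_0)\bigr).
\]
As a minimum of two $1$-Lipschitz functions, $g$ is $1$-Lipschitz; moreover $g\ge m+\epsilon^2$, so $g$ is bounded below, and is strictly positive when $f\ge0$, which is what lets this one construction serve $S_+(X)$ as well as $S(X)$. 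The inequality $g\le f+\epsilon$ is immediate, and the reverse bound $g\ge f-\epsilon$ holds because, where the cone $\ell(x):=m+\epsilon^2+d(x,x_0)$ is the smaller term, the Lipschitz estimate $f(x)\le f(x_0)+d(x,x_0)\le m+\epsilon^2+d(x,x_0)$ gives $f(x)-\epsilon<\ell(x)=g(x)$; hence $M(f,g)\le\epsilon$. Since $f\ge m$, one has $g\ge\min(m+\epsilon,\ell)$, which exceeds $m+\epsilon^2$ once $d(x,x_0)\ge\epsilon-\epsilon^2$, while for $d(x,x_0)<\epsilon-\epsilon^2$ the cone is active and $g(x)=m+\epsilon^2+d(x,x_0)$; in particular $\inf_X g=g(x_0)=m+\epsilon^2$. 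Thus any minimizing sequence eventually enters the ball of radius $\epsilon-\epsilon^2$ about $x_0$, where $g(x_n)=m+\epsilon^2+d(x_n,x_0)\to m+\epsilon^2$ forces $d(x_n,x_0)\to0$; so $g$ has a strong minimum at $x_0$ and $g\in S(X)$ (resp. $S_+(X)$). Finally $\rho(f,g)\le\epsilon$ and $\tilde\rho(f,g)\le\epsilon+2\epsilon^2$, giving all four density statements.

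The main obstacle is exactly this perturbation: one must create a strong (not merely strict) minimum while keeping $g$ exactly $1$-Lipschitz and uniformly within $\epsilon$ of $f$, demands that pull against each other, since $f$ may already saturate the Lipschitz bound (so no positive bump can be added) and may approach its infimum along sequences escaping to infinity. The device reconciling them is to cap $f+\epsilon$ by a slope-one cone pinned just above the infimum at a near-minimizer $x_0$: the slope-one choice prevents the cone from undercutting $f$ by more than $\epsilon$ (this is precisely where $f(x)\le f(x_0)+d(x,x_0)$ enters), while pinning it at height $m+\epsilon^2$ makes the cone active only on a small ball, which both localizes and sharpens the minimum. Checking that the cone is active on a genuine ball and inactive outside it is the crux on which the strong-minimum property rests; interestingly, completeness of $X$ is not actually needed for either part, the convergence $d(x_n,x_0)\to0$ being the definition of $x_n\to x_0$.
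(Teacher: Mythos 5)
Your proof is correct, but for the density parts it takes a genuinely different route from the paper. The paper proves $(2)$ by first shrinking $f$ to $f_\epsilon:=(1-\epsilon)f$, which is $(1-\epsilon)$-Lipschitz, and then invoking the Deville--Godefroy--Zizler variational principle to supply a Lipschitz perturbation $\varphi_\epsilon$ with $\sup_X|\varphi_\epsilon|\le\epsilon$ and Lipschitz constant at most $\epsilon$ such that $f_\epsilon+\varphi_\epsilon$ has a strong minimum; this is precisely where completeness of $X$ is used. It then deduces $(3)$ from $(2)$ by a two-case discussion according to whether $\inf_X f>0$ or $\inf_X f=0$, translating by $-\inf_X f_\epsilon$ in the second case to restore positivity. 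Your cone construction $g=\min\bigl(f+\epsilon,\ m+\epsilon^2+d(\cdot,x_0)\bigr)$ replaces the variational principle by an explicit perturbation, and the verifications you give are sound: the bound $|g-f|\le\epsilon$ does follow from $f(x)\le f(x_0)+d(x,x_0)$ on the set where the cone is the smaller term, and the cone is active exactly on the ball of radius $\epsilon-\epsilon^2$ about $x_0$, which yields the strong minimum with an explicit modulus. What your approach buys is elementarity and slightly more generality: it requires no completeness of $X$ and disposes of $(3)$ in the same stroke, since $g\ge m+\epsilon^2>0$ whenever $f\ge 0$. What the paper's approach buys is brevity modulo the cited principle, and the extra information that the perturbation can be taken small in Lipschitz norm and not merely in sup norm, which your $g-f$ is not. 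Your part $(1)$ is the standard uniform-limit argument (the paper only cites an analogue from an earlier work), and your splitting of a $\tilde{\rho}$-Cauchy sequence into the normalized functions and the real sequence of infima is correct.
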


\begin{proof}
The part $(1)$ is similar to Proposition 5. in \cite{Ba1} (See also Lemma 1. in \cite{Ba1}). Let us prove the part $(2)$. Indeed, let $f\in Lip^1(X)$ and $0<\epsilon<1$. Consider the function $f_\epsilon:=(1-\epsilon) f$. Clearly, $f_\epsilon$ is $(1-\epsilon)$-Lipschitz and $\rho(f_\epsilon,f)\rightarrow 0$ (respectively $\tilde{\rho}(f_\epsilon,f)\rightarrow 0$) when $\epsilon \rightarrow 0$. On the other hand, applying the variational principle of Deville-Godefroy-Zizler \cite{DGZ} to the $(1-\epsilon)$-Lipschitz and bounded from below function $f_\epsilon$, there exists a bounded Lipschitz function $\varphi_\epsilon$ on $X$ such that $\sup_{x\in X} |\varphi_\epsilon(x)| \leq \epsilon$ and $\sup_{x, y \in X/ x\neq y} \frac{|\varphi_\epsilon(x)-\varphi_\epsilon(y)|}{|x-y|}\leq \epsilon$ and $f_\epsilon+\varphi_\epsilon$ has a strong minimum at some point. We have that $f_\epsilon+\varphi_\epsilon$ is $1$-Lipschitz and bounded from bellow function having a strong minimum, so $f_\epsilon+\varphi_\epsilon \in S(X)$. On the other hand, $\rho(f_\epsilon+\varphi_\epsilon,f)\leq \rho(f_\epsilon+\varphi_\epsilon,f_\epsilon) + \rho(f_\epsilon,f)=\rho(\varphi_\epsilon,0)+\rho(f_\epsilon,f)$. It follows that $\rho(f_\epsilon+\varphi_\epsilon,f)\rightarrow 0$ when $\epsilon \rightarrow 0$ (respectively $\tilde{\rho}(f_\epsilon+\varphi_\epsilon,f)\rightarrow 0$). Thus $(S(X),\rho)$ and $(S(X),\tilde{\rho})$ are respectively dense in $(Lip^1(X),\rho)$ and $(Lip^1(X),\tilde{\rho})$.

To prove $(3)$, let $f\in Lip^1_+(X)$, from $(2)$, there exists $f_\epsilon \in S(X)$ such that $\rho(f_\epsilon,f)\rightarrow 0$ when $\epsilon \rightarrow 0$. In particular, $\inf_X f_\epsilon \rightarrow \inf_X f$. If $\inf_X f >0$, then for very small $\epsilon$ we have $f_\epsilon >0$ and so $f_\epsilon\in S_+(X)$. If $\inf_X f=0$, since $\inf_X f_\epsilon \rightarrow \inf_X f=0$ then  $\rho(f_\epsilon-\inf_X f_\epsilon,f)\leq \rho(f_\epsilon-\inf_X f_\epsilon,f_\epsilon)+\rho(f_\epsilon,f)\rightarrow 0$ when $\epsilon \rightarrow 0$ and $\left(f_\epsilon-\inf_X f_\epsilon\right)\in S_+(X)$. Thus, $(S_+(X),\rho)$ is dense in $(Lip^1_+(X),\rho)$. We deduce then that $(S_+(X),\tilde{\rho})$ is also dense in $(Lip^1_+(X),\tilde{\rho})$
\end{proof}
\section{The map $\arg\min(.)$ as monoid morphism.}

 For a real-valued function $f$ with domain $X$, $\arg\min(f)$ is the set of elements in $X$ that realize the global minimum in $X$,
 $${\arg\min}(f)=\{x\in X:\,f(x)=\inf_{{y\in X}}f(y)\}.$$
 For the class of functions $f\in S(X)$, $ {\arg\min}(f)=\left\{x_f\right\}$ is a singleton, where $x_f$ is the strong minimum of $f$. In what follows, we identify the singleton  $\left\{x\right\}$ with the element $x$. We have the following proposition.
 \begin{Cor} Let $(X,.,d)$ be a group complete metric invariant having $e$ as identity element. Then, 
  the map, $$ {\arg\min} : (S(X),\oplus,\rho) \rightarrow (X,.,d)$$ is surjective and continuous monoid morphism. We have the following commutative diagram, where $I$ denotes the identity map on $X$ and $\gamma$ the Kuratowski operator
  \[
  \xymatrix{
     (X,.) \ar[r]^{\gamma} \ar[rd]_{I}  &  (S(X),\oplus) \ar[d]^{{\arg\min}} \\
      & (X,.) } 
 \]
 \end{Cor}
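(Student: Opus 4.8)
The plan is to establish the three asserted properties of ${\arg\min}$ separately---that it is a monoid morphism, that it is surjective and fits into the commutative triangle, and that it is continuous---each resting on Theorem \ref{Fond0} together with the fact that, since $(X,.,d)$ is a group metric invariant, the law $\alpha=.$ is $d$-invariant at \emph{every} point of $X$ (Example 3). Recall also that for $f\in S(X)$ the set ${\arg\min}(f)$ is a singleton $\{x_f\}$, so ${\arg\min}$ is genuinely single valued.

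First I would settle the commutative diagram and surjectivity. For any $x\in X$ the map $\gamma(x)=d(x,.)$ is $1$-Lipschitz and bounded below by $0$, attains the value $0$ only at $x$, and satisfies $d(t,x)\to 0$ whenever $\gamma(x)(t)\to 0$; hence $\gamma(x)$ has a strong minimum at $x$, so $\gamma(x)\in S(X)$ and ${\arg\min}(\gamma(x))=x$. This is precisely ${\arg\min}\circ\gamma=I$, which makes the triangle commute, and since $I$ is onto, it forces ${\arg\min}$ to be surjective. In particular ${\arg\min}(\gamma(e))=e$, so the identity $\gamma(e)$ of the monoid $(S(X),\oplus)$ is carried to the identity $e$ of $(X,.)$.

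Next, the morphism property. Let $f,g\in S(X)$ have strong minima at $x_f={\arg\min}(f)$ and $x_g={\arg\min}(g)$, and set $a:=x_f x_g$. Since $\alpha$ is $d$-invariant at $a$, the implication $II)\Rightarrow I)$ of Theorem \ref{Fond0}, applied with $\tilde y=x_f$ and $\tilde z=x_g$ (so $\tilde y\tilde z=a$), yields that $f\oplus g$ has a strong minimum at $a$. As $f\oplus g\in S(X)$ and a strong minimum is unique, this reads ${\arg\min}(f\oplus g)=x_f x_g={\arg\min}(f).{\arg\min}(g)$. Combined with the identity-to-identity statement above, ${\arg\min}$ is a monoid morphism.

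Finally, continuity, which I expect to be the part needing the most independent work, though it is elementary. The key observation is that, since $t\mapsto t/(1+t)$ is an increasing continuous bijection of $[0,+\infty]$ onto $[0,1]$, one has $\rho(f_n,f)\to 0$ if and only if $\sup_{x\in X}|f_n(x)-f(x)|\to 0$; that is, $\rho$-convergence on $Lip^1(X)$ coincides with uniform convergence. So take $f_n\to f$ uniformly with all $f_n,f\in S(X)$, and write $x_n={\arg\min}(f_n)$, $x_f={\arg\min}(f)$. Fix $\epsilon>0$ and let $\delta>0$ be given by the strong-minimum property of $f$ at $x_f$. For $n$ large enough that $\sup_x|f_n(x)-f(x)|<\delta/2$, using $f_n(x_n)=\inf_X f_n\le f_n(x_f)$ one gets $f(x_n)\le f_n(x_n)+\delta/2\le f_n(x_f)+\delta/2\le f(x_f)+\delta$, hence $0\le f(x_n)-f(x_f)<\delta$ and therefore $d(x_n,x_f)\le\epsilon$. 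This shows $x_n\to x_f$, i.e. ${\arg\min}$ is continuous, completing the proof.
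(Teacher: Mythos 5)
Your proof is correct and follows essentially the same route as the paper's: the morphism property via the implication $II)\Rightarrow I)$ of Theorem \ref{Fond0}, surjectivity via $\hat{X}\subset S(X)$ with ${\arg\min}(\gamma(x))=x$, and continuity by translating $\rho$-closeness into uniform closeness and feeding the resulting bound on $f(x_n)-f(x_f)$ into the strong-minimum property of $f$ at $x_f$. The only cosmetic difference is in the continuity step, where the paper estimates $|f(x_n)-f(x_f)|\leq |f(x_n)-f_n(x_n)|+|\inf_X f_n-\inf_X f|$ with the explicit threshold $\alpha=\delta/(2+\delta)$, while you chain the equivalent inequalities $f(x_n)\leq f_n(x_n)+\delta/2\leq f_n(x_f)+\delta/2\leq f(x_f)+\delta$.
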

 
\begin{proof} Let $f,g \in S(X)$, then there exists $x_f, x_g\in X$ such that $f$ has a strong minimum at $x_f={\arg\min}(f)$ and $g$ has a strong minimum at $x_g={\arg\min}(g)$. Using Theorem \ref{Fond0}, $f\oplus g$ has a strong minimum at $x_f x_g=\left({\arg\min}(f)\right)\left({\arg\min}(g)\right)$. Thus ${\arg\min}(f\oplus g)=\left({\arg\min}(f)\right)\left({\arg\min}(g)\right)$. On the other hand, the map ${\arg\min}$ send the identity element $d(e,.)$ of $S(X)$ to the identity element $e$ of $X$, since the strong minimum of $d(e,.)$ is $e$. Hence, ${\arg\min}$ is a monoid morphism. For each $x\in X$, $\gamma(x)\in \hat{X}\subset S(X)$ and  ${\arg\min}\left( \gamma(x)\right)=x$. Thus, ${\arg\min}$ is surjective. Let us prove now the continuity of ${\arg\min}$. First, note that for all $f, g\in Lip^1(X)$ and all $0<\alpha<1$, 
\begin{eqnarray} \label{cont1} \rho\left(f,g\right)\leq \alpha \Rightarrow \sup_{x\in X}|f(x)-g(x)|\leq \frac{\alpha}{1-\alpha}.
\end{eqnarray}
and in consequence, we also have 
\begin{eqnarray} \label{cont2} |\inf_X f-\inf_X g|\leq \frac{\alpha}{1-\alpha}.
\end{eqnarray}
Let $(f_n)_n\subset S(X)$ and $f\in S(X)$. Let $x_n:={\arg\min}(f_n)$ and $x_f={\arg\min}(f)$. Since $f$ has a strong minimum at $x_f$, for all $\epsilon>0$, there exists $\delta>0 $ such that for all $x\in X$,
$$|f(x)-f(x_f)|\leq \delta \Rightarrow d(x,x_f)\leq \epsilon.$$
Suppose that $\rho\left(f_n,f\right)\leq \frac{\delta}{2+\delta}$. Using the triangular inequality and the inequations (\ref{cont1}) and (\ref{cont2}) with $\alpha=\frac{\delta}{2+\delta}<1$, we have
\begin{eqnarray} |f(x_n)-f(x_f)| &\leq& |f(x_n)-f_n(x_n)|+|f_n(x_n)-f(x_f)|\nonumber\\
                              &=&  |f(x_n)-f_n(x_n)|+|\inf_X f_n-\inf_X f|\nonumber\\
                              &\leq& \frac{2\alpha}{1-\alpha}\nonumber\\
                              &=& \delta \nonumber
\end{eqnarray}
which implies that $d({\arg\min}(f_n), {\arg\min}(f)):=d(x_n,x_f)\leq \epsilon$. This implies the continuity of ${\arg\min}$ on $S(X)$
\end{proof}

Note that, the map $\xi : (Lip^1(X),\oplus,\rho) \rightarrow \R$ defined by $\xi: f\mapsto \inf_X f$, is continuous monoid morphism. 
The following proposition which is a consequence of the above corollary, says that, in the case of $(S(X),\oplus,\rho)$ there are several continuous monoid morphism from $(S(X),\oplus)$ into $\K$ with $\K=\R$ or $\C$.
\begin{Prop} Let $(X,.,d)$ be a group complete metric invariant and $\chi : (X,.,d)\rightarrow \K$ be a continuous group morphism. Then, $\chi\circ {\arg\min}: (S(X),\oplus,\rho) \rightarrow \K$ is a continuous monoid morphism.
\end{Prop}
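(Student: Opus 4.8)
The plan is to deduce this statement directly from the preceding corollary, using that a composition of two continuous monoid morphisms is again a continuous monoid morphism. I expect no genuine obstacle here; the only point needing attention is that $\chi$, being a group morphism, is in particular a monoid morphism (it automatically preserves identities), after which everything reduces to composing the two maps.

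First I would recall from the preceding corollary that $\arg\min : (S(X),\oplus,\rho) \rightarrow (X,.,d)$ is a continuous monoid morphism: for all $f,g \in S(X)$ we have $\arg\min(f\oplus g) = \arg\min(f).\arg\min(g)$, and $\arg\min$ sends the identity $\gamma(e)=d(e,.)$ of $S(X)$ to the identity $e$ of $X$.

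Next I would verify the algebraic part. For $f,g \in S(X)$, applying $\chi$ to the identity above gives
\[
(\chi\circ\arg\min)(f\oplus g) = \chi\big(\arg\min(f).\arg\min(g)\big),
\]
and since $\chi$ is a group morphism, the right-hand side equals the product in $\K$ of $\chi(\arg\min(f))$ and $\chi(\arg\min(g))$, that is, of $(\chi\circ\arg\min)(f)$ and $(\chi\circ\arg\min)(g)$. For the identities, a group morphism necessarily sends the neutral element $e$ of $X$ to the neutral element of $\K$, so $(\chi\circ\arg\min)(\gamma(e)) = \chi(e)$ is the identity of $\K$; hence $\chi\circ\arg\min$ maps the identity of $S(X)$ to the identity of $\K$. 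Thus $\chi\circ\arg\min$ is a monoid morphism.

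Finally, continuity is immediate: $\arg\min$ is continuous from $(S(X),\rho)$ to $(X,d)$ by the preceding corollary, and $\chi$ is continuous from $(X,d)$ to $\K$ by hypothesis, so the composite is continuous. Combining the two parts gives that $\chi\circ\arg\min$ is a continuous monoid morphism, as claimed.
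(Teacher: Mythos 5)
Your proof is correct and follows exactly the route the paper intends: the paper offers no separate argument, simply declaring the proposition "a consequence of the above corollary," i.e.\ the composition of the continuous monoid morphism $\arg\min$ with the continuous group morphism $\chi$. Your explicit verification of the algebraic and continuity parts of that composition is a faithful (and slightly more detailed) rendering of the same idea.
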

\section{Examples of inf-convolution monoid in the discrete case.} 
Let $X$ be a group with the identity element $e$. We equip $X$ with the discrete metric $dis$. So $(X,dis)$ is group metric invariant and $Lip^1_+(X)$ consist in this case on all positive functions such that $|f(x)-f(y)|\leq 1$ for all $x,y\in X$. The Kuratowski operator $\gamma : x\mapsto \delta_x$ is here defined by $\delta_x(y)=1$ if $y\neq x$ and $\delta_x(x)=0$, for all $x, y\in X$. We treat below the cases of $X=\Z$ and $X=\Z/p\Z$.
\subsection{The inf-convolution monoid $(l^{\infty}_{dis}(\Z),\oplus)$.} 
Let $X=\Z$ equipped with the discrete metric $dis$ which is invariant.
Let $l_{dis}^{\infty}(\Z)$ the set of all sequences $(x_n)_n$ of real positive numbers such that $|x_n-x_m|\leq 1$ for all $n,m\in \Z$. For $u=(u_n)_n$ and $v=(u_n)_n$ in $l^{\infty}_{dis}(\Z)$, we define the sequence $$(u\oplus v)_n:=\inf_{k\in \Z} (u_{n-k}+v_k); \hspace{3mm} \forall n\in \Z.$$
The set $(l^{\infty}_{dis}(\Z),\oplus)$ is a commutative monoid having the element $\delta_e$ as identity element and its group of unit $\mathcal{U}(l^{\infty}_{dis}(\Z))$ is isomorphic to $\Z$ by the isomorphism $I: \Z\rightarrow \mathcal{U}(l^{\infty}_{dis}(\Z))$, $k\mapsto \delta_k$. 
\subsection{The inf-convolution monoid $(l^{\infty}_{dis}(\Z/p\Z),\oplus)$.} 
Let $p\in \N^*$ and $X=\Z/p\Z$ equipped with the discrete metric $dis$ which is invariant. We denote by $l^{\infty}_{dis}(\Z/p\Z)$ the set of all $p$-periodic sequences $(x_n)_n$ of real positive numbers such that $|x_n-x_m|\leq 1$ for all $n,m\in \left\{0,...,p-1\right\}$. We identify a sequence $(x_n)_n\in l^{\infty}_{dis}(\Z/p\Z)$ with $\left(x_0,...,x_{p-1}\right)$. For $u=(u_n)_n$ and $v=(u_n)_n$ in $l^{\infty}_{dis}(\Z/p\Z)$, we define the sequence $$(u\oplus v)_n:=\min_{k\in \left\{0,...,p-1\right\}} (u_{n-k}+v_k); \hspace{3mm} \forall n\in \left\{0,...,p-1\right\}.$$
The set $(l^{\infty}_{dis}(\Z/p\Z),\oplus)$ is a commutative monoid having the element $\delta_e$ as identity element and its group of unit $\mathcal{U}(l^{\infty}_{dis}(\Z/p\Z))$ is isomorphic to $\Z/p\Z$ by the isomorphism $I: \Z/p\Z\rightarrow \mathcal{U}(l^{\infty}_{dis}(\Z/p\Z))$, $\bar{k}\mapsto \delta_k$.
\section{The set of Katetov functions.}\label{S2}
 We give in this section some results about the monoid structure of $\mathcal{K}(X)$ when $X$ is a group, and the convex cone structure of the subset $\mathcal{K}_C(X)$ of $\mathcal{K}(X)$ (of convex functions) when $X$ is a Banach space. If $M$ is a monoid, by $\mathcal{U}(M)$ we denote the group of unit of $M$.
\subsection{The monoid structure of $\mathcal{K}(X)$.}
\begin{Prop} \label{x2} Let $(X,d)$ be a (commutative) group metric invariant having $e$ as identity element. 
Then, the metric space $(\mathcal{K}(X),\oplus, d_{\infty})$ is also a (commutative) monoid having $\gamma(e)=\delta_e$ as identity element and  satisfying:
\item $(a)$  $d_{\infty}(f\oplus g,h\oplus g)\leq d_{\infty}(f,h)$ and $d_{\infty}(g\oplus f,g\oplus h)\leq d_{\infty}(f,h)$, for all $f,g, h\in \mathcal{K}(X)$ 
\item $(b)$  $d_{\infty}(\delta_x\oplus f,\delta_x\oplus h)= d_{\infty}(f\oplus \delta_x,h\oplus \delta_x)=d_{\infty}(f,h)$, for all $f,h\in \mathcal{K}(X)$.
\end{Prop}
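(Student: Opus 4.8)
First I would set up conventions: a Katetov map is nonnegative (put $x=y$ in $d(x,y)\le f(x)+f(y)$) and $1$-Lipschitz (the left inequality in (\ref{eq1})), and since the group law is surjective one has, for every $x$, $f\oplus g(x)=\inf_{y\in X}\{f(y)+g(y^{-1}x)\}$, a real number lying in $[0,f(e)+g(x)]$. The plan is then to check successively that $\oplus$ is an internal law on $\mathcal{K}(X)$, that it is associative with unit $\delta_e$ (and commutative when $X$ is), and finally the contraction estimates $(a)$ and $(b)$.

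For closure I would argue as follows. For the Lipschitz (upper Katetov) inequality, observe that for each fixed $y$ the map $x\mapsto f(y)+g(y^{-1}x)$ is $1$-Lipschitz, because left translation $x\mapsto y^{-1}x$ is a $d$-isometry by metric invariance and $g$ is $1$-Lipschitz; an infimum of a finite family value of $1$-Lipschitz functions is again $1$-Lipschitz, so $|f\oplus g(x)-f\oplus g(x')|\le d(x,x')$. For the lower Katetov inequality I would write $x=y\,(y^{-1}x)$ and $x'=y'\,(y'^{-1}x')$ for arbitrary $y,y'$ and use the triangle inequality together with metric invariance to get
\[
d(x,x')\le d(y,y')+d(y^{-1}x,y'^{-1}x')\le \big(f(y)+f(y')\big)+\big(g(y^{-1}x)+g(y'^{-1}x')\big),
\]
the last step invoking the lower Katetov inequalities for $f$ and for $g$; taking the infimum over $y$ and over $y'$ independently yields $d(x,x')\le f\oplus g(x)+f\oplus g(x')$, hence $f\oplus g\in\mathcal{K}(X)$. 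I expect this lower inequality, where metric invariance is indispensable, to be the main obstacle; the remaining items are comparatively formal.

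Next I would verify the monoid axioms. Associativity follows by re-indexing the iterated infimum: both $(f\oplus g)\oplus h(x)$ and $f\oplus(g\oplus h)(x)$ equal $\inf\{f(y)+g(z)+h(w):yzw=x\}$, the factorization $yzw$ being unambiguous by associativity of the group. For the unit I would prove $\delta_e\oplus f=f\oplus\delta_e=f$: the trivial factorization gives $\delta_e\oplus f(x)\le f(x)$, while for any $y$ the $1$-Lipschitzness of $f$ together with $d(x,y^{-1}x)=d(e,y)$ (metric invariance) gives $d(e,y)+f(y^{-1}x)\ge f(x)$, whence $\delta_e\oplus f(x)\ge f(x)$; the identity $f\oplus\delta_e=f$ is symmetric. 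When $X$ is commutative, $f\oplus g=g\oplus f$ by the change of index $y\leftrightarrow z$ in $yz=zy=x$. Thus $(\mathcal{K}(X),\oplus,d_{\infty})$ is a (commutative) monoid with unit $\gamma(e)=\delta_e$.

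Finally I would treat the estimates. For $(a)$, if $c:=d_{\infty}(f,h)$ then $h(\cdot)-c\le f(\cdot)\le h(\cdot)+c$, and applying the monotone operation $\cdot\,\oplus g$ gives $h\oplus g-c\le f\oplus g\le h\oplus g+c$ pointwise, i.e.\ $d_{\infty}(f\oplus g,h\oplus g)\le c$; the second inequality in $(a)$ is symmetric. For $(b)$, the computation used for the unit yields the explicit formulas $\delta_x\oplus f=f(x^{-1}\,\cdot)$ and $f\oplus\delta_x=f(\cdot\,x^{-1})$ (one also needs $d(x,y)=d(x^{-1},y^{-1})$, itself a consequence of invariance), that is, left and right translations of $f$ by $x$. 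Since translation is a bijection of $X$, the supremum defining $d_{\infty}$ is unchanged under the corresponding substitution, giving $d_{\infty}(\delta_x\oplus f,\delta_x\oplus h)=d_{\infty}(f\oplus\delta_x,h\oplus\delta_x)=d_{\infty}(f,h)$. Alternatively, $(b)$ follows from $(a)$ and the invertibility of $\delta_x$: by Proposition \ref{int}, $\delta_x\oplus\delta_{x^{-1}}=\gamma(xx^{-1})=\delta_e$, so applying $(a)$ to $\delta_{x^{-1}}\oplus(\cdot)$ produces the reverse inequality and hence equality.
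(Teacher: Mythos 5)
Your proposal is correct, and its core arguments coincide with the paper's where the paper actually gives arguments; the main difference is that you prove everything from scratch, whereas the paper delegates most of the monoid structure to \cite{Ba2}. Concretely, the paper opens by citing \cite{Ba2} for the fact that $Lip^1(X)$ is already a (commutative) monoid with identity $\delta_e$, so the only closure property it verifies is the lower Katetov inequality $d(x_1,x_2)\le f\oplus g(x_1)+f\oplus g(x_2)$; you instead re-derive the $1$-Lipschitzness of $f\oplus g$, associativity by re-indexing the iterated infimum, the unit law, and commutativity. For the lower Katetov inequality your argument and the paper's are essentially identical (two factorizations $x=y(y^{-1}x)$, $x'=y'(y'^{-1}x')$, the triangle inequality through the mixed point, and metric invariance on both sides), the only cosmetic difference being that you take infima over arbitrary $y,y'$ while the paper runs the same estimate along $\frac1n$-minimizing sequences. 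For part $(a)$ your route is genuinely slicker: the paper redoes an $\varepsilon$-minimizer computation, while you observe that $\oplus$ is monotone in each argument and that constants pull out, so $h-c\le f\le h+c$ immediately gives $h\oplus g-c\le f\oplus g\le h\oplus g+c$; this buys a one-line proof and makes the contraction property transparent. For part $(b)$ both proofs rest on the explicit translation formulas $\delta_x\oplus f=f(x^{-1}\,\cdot)$ and $f\oplus\delta_x=f(\cdot\,x^{-1})$ together with bijectivity of translations (and you correctly flag the needed identity $d(x^{-1},y^{-1})=d(x,y)$, which follows from two-sided invariance); your alternative derivation of $(b)$ from $(a)$ via $\delta_x\oplus\delta_{x^{-1}}=\delta_e$ and associativity is also valid and is in fact the mechanism behind the paper's later Lemma \ref{x3}. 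The net effect is a longer but self-contained proof that does not depend on \cite{Ba2}.
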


\begin{proof} Since $\mathcal{K}(X)$ is a subset of the (commutative) monoid $Lip^1(X)$ of $1$-Lipschitz and bounded from below functions, which have $\delta_e$ as identity element (See \cite{Ba2}), it suffices to prove that, for all $f,g\in \mathcal{K}(X)$ and all $x_1,x_2\in X$, we have $$d(x_1,x_2) \leq f\oplus g(x_1)+f\oplus g(x_2)$$ 
 Indeed, it follows easily from the definition of the infinimum, the formula (\ref{eq1}) and the metric invariance that, for all $n\in \N^*$, there exists $y_n, z_n, y'_n, z'_n\in X$ such that $y_nz_n=x_1$,  $y'_n z'_n=x_2$ and
\begin{eqnarray} f\oplus g(x_1)+f\oplus g(x_2) &\geq& \left(f(y_n)+g(z_n)+\frac1 n\right)+\left(f(y_n')+g(z_n')+\frac1 n\right) \nonumber \\
                                               &=& \left(f(y_n)+f(y_n')\right)+\left(g(z_n)+g(z_n')\right)+\frac2 n\nonumber \\
                                               &\geq& d(y_n,y_n')+d(z_n,z_n') +\frac2 n\nonumber \\
                                               &= & d(y_nz_n,y_n'z_n)+d(y'_nz_n,y'_nz'_n)+\frac2 n \nonumber \\
                                               &=& d(x_1,y_n'z_n)+d(y'_nz_n,x_2)+\frac2 n\nonumber \\
                                               &\geq& d(x_1,x_2)+\frac2 n\nonumber                                              
\end{eqnarray}
Thus $f\oplus g(x_1)+f\oplus g(x_2)\geq d(x_1,x_2)$ by sending $n$ to $+\infty$. Hence $(\mathcal{K}(X),\oplus)$ is a monoid having $\delta_e$ as identity element.\\
We prove now that $d_{\infty}(f\oplus g,h\oplus g)\leq d_{\infty}(f,h)$. Let $f,g,h\in \mathcal{K}(X)$ and $x\in X$, there exists $y_n, z_n$ such that $y_nz_n=x$ and $h\oplus g(x)>h(y_n)+g(z_n) -\frac1 n$. Hence, for all $n\in \N^*$ 
\begin{eqnarray} f\oplus g(x)-h\oplus g(x) &\leq& \left(f(y_n)+g(z_n)\right)+\left(-h(y_n)-g(z_n) +\frac1 n\right)\nonumber \\
                                               &=& f(y_n)-h(y_n)+\frac1 n\nonumber \\
                                               &\leq& d_{\infty}(f,h)+\frac1 n\nonumber 
\end{eqnarray} 
Hence, $d_{\infty}(f\oplus g,h\oplus g)\leq d_{\infty}(f,h)$ by sending $n$ to $+\infty$. In a similar way we prove that $d_{\infty}(g\oplus f,g\oplus h)\leq d_{\infty}(f,h)$. For the part $(b)$, it suffices to prove that $f\oplus \delta_a(.)=f(.a^{-1})$ and $\delta_a \oplus f (.)=f(a^{-1}.)$ for all $a\in X$, since the map  $x\mapsto ax$ and $x\mapsto xa$ are one to one and onto from $X$ to $X$ whenever $a$ is invertible. Indeed, $f\oplus \delta_a(x)=\inf_{yz=x}\left\{f(y)+d(z,a)\right\}=\inf_{(ya^{-1})(az)=x} \left\{f(ya^{-1})+d(az,a)\right\}$. Using the metric invariance, we have for all $x\in X$,
$f\oplus \delta_a(x)=\inf_{yz=x}\left\{f(ya^{-1})+d(z,e)\right\}:=f(.a^{-1})\oplus \delta_e (x)=f(.a^{-1})(x)$, since $\delta_e$ is the identity element. Similarly we prove that $\delta_a \oplus f (.)=f(a^{-1}.)$. This conclude the proof of the proposition.
\end{proof}

\begin{Rem} \label{Z} In general, one can not get equality in the part $(a)$ of Proposition \ref{x2} since the inf-convolution does not have the cancellation property in general (See \cite{ZA}).
\end{Rem}

If $ Y \subset X$ and $f\in \mathcal{K}(Y)$, define  $\overline{f} : X \rightarrow \R$ (the Katetov extension of f)
by $f(x) = inf_{y\in Y}\left\{f(y) + d(x, y)\right\}$. It is well known that $\overline{f}$ is the greatest 1-Lipschitz map on $X $ which is equal to $f$ on $Y$ ; that $\overline{f}\in \mathcal{K}(X)$ and $\chi: f \mapsto \overline{f}$ is an isometric
embedding of $\mathcal{K}(Y)$ into $\mathcal{K}(X)$ (see for instance \cite{KT}). Thanks to the following lemma (we can find a more general form in \cite{Ba2}) we can assume without loss of generality that $X$ is a complete metric space.

\begin{Lem} \label{Extension} Let $(X,d)$ be a group which is metric invariant and $(\overline{X},\overline{d})$ its group completion. Then, $(\mathcal{K}(X),\oplus,d_{\infty})$ and $(\mathcal{K}(\overline{X}),\oplus,d_{\infty})$ are isometrically isomorphic as monoids. More precisely, the map
\begin{eqnarray}
\chi: (\mathcal{K}(X),\oplus,d_{\infty})&\rightarrow& (\mathcal{K}(\overline{X}),\oplus,d_{\infty})\nonumber\\
 f&\mapsto& \overline{f}:=\left[\overline{x}\in \overline{X} \mapsto \inf_{y\in X}\left\{f(y)+\overline{d}(y,\overline{x})\right\}\right]
\end{eqnarray}
is an isometric isomorphism of monoids.
  \end{Lem}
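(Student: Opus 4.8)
The plan is to verify in turn that $\chi$ is a well-defined isometric embedding, that it is surjective, and finally that it is a monoid morphism; this last point carries the real content.

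Before anything, I would record the structure of the completion. Since $(X,d)$ is metric invariant, the group operations are uniformly continuous, hence extend continuously to $\overline{X}$, which thereby becomes a group metric invariant for the (invariant) extended metric $\overline{d}$, with the same identity $e$, and in which $X$ sits as a dense subgroup. In particular, by Proposition \ref{x2} applied to $\overline{X}$, the target $(\mathcal{K}(\overline{X}),\oplus,d_{\infty})$ is a (commutative) monoid with identity $\delta_e$, so the statement is meaningful.

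First I would invoke the Katetov extension theory recalled just above the statement, applied to the dense subspace $X\subset\overline{X}$: for every $f\in\mathcal{K}(X)$ one has $\overline{f}\in\mathcal{K}(\overline{X})$, the map $\overline{f}$ restricts to $f$ on $X$, and $\chi$ is an isometric embedding. Because $X$ is dense, a $1$-Lipschitz map on $X$ has a \emph{unique} continuous extension to $\overline{X}$, so $\overline{f}$ is the only element of $\mathcal{K}(\overline{X})$ extending $f$. This uniqueness yields surjectivity immediately: given $g\in\mathcal{K}(\overline{X})$, its restriction $h:=g|_{X}$ lies in $\mathcal{K}(X)$ (the Katetov inequalities survive restriction since $\overline{d}=d$ on $X$), and by uniqueness $\overline{h}=g$, that is $\chi(h)=g$. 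Hence $\chi$ is an isometric bijection.

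It remains to check that $\chi$ is a monoid morphism. For the identity element, $\overline{\delta_e}=\delta_e$ on $\overline{X}$ follows from uniqueness of the continuous extension, or directly from $\inf_{y\in X}\{\overline{d}(e,y)+\overline{d}(y,\overline{x})\}=\overline{d}(e,\overline{x})$ via the triangle inequality and density. For the product, both $\overline{f\oplus g}$ and $\overline{f}\oplus\overline{g}$ belong to $\mathcal{K}(\overline{X})$, hence are $1$-Lipschitz and continuous on $\overline{X}$; since $X$ is dense it suffices to show they agree on $X$. On $X$ the map $\overline{f\oplus g}$ restricts to $f\oplus g$, so the whole question reduces to proving, for $x\in X$,
\begin{eqnarray}
\inf_{\substack{\overline{y}\,\overline{z}=x\\ \overline{y},\overline{z}\in\overline{X}}}\left\{\overline{f}(\overline{y})+\overline{g}(\overline{z})\right\}=\inf_{\substack{yz=x\\ y,z\in X}}\left\{f(y)+g(z)\right\}.\nonumber
\end{eqnarray}
The inequality $\leq$ is immediate, since $X\times X$ furnishes competitors on the left and $\overline{f},\overline{g}$ restrict to $f,g$ on $X$. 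The reverse inequality is the main obstacle, and is exactly where density and the group structure of $\overline{X}$ are used: given a factorization $\overline{y}\,\overline{z}=x$ in $\overline{X}$, I would pick $y_n\in X$ with $y_n\to\overline{y}$ and set $z_n:=y_n^{-1}x\in X$. Then $y_n z_n=x$ for every $n$, while $z_n\to\overline{y}^{-1}x=\overline{z}$ by continuity of the extended group operations; since $\overline{f},\overline{g}$ are $1$-Lipschitz and agree with $f,g$ on $X$, we get $f(y_n)=\overline{f}(y_n)\to\overline{f}(\overline{y})$ and $g(z_n)=\overline{g}(z_n)\to\overline{g}(\overline{z})$. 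Thus the right-hand infimum is $\leq\overline{f}(\overline{y})+\overline{g}(\overline{z})$, and taking the infimum over all factorizations in $\overline{X}$ gives the inequality $\geq$. This establishes $\overline{f\oplus g}=\overline{f}\oplus\overline{g}$, so $\chi$ is a monoid isomorphism; being moreover isometric, it exhibits $(\mathcal{K}(X),\oplus,d_{\infty})$ and $(\mathcal{K}(\overline{X}),\oplus,d_{\infty})$ as isometrically isomorphic monoids.
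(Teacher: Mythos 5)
Your proposal is correct and follows essentially the same route as the paper: surjectivity via restriction to the dense subgroup $X$ and uniqueness of continuous ($1$-Lipschitz) extensions, and the morphism property by checking $\overline{f}\oplus\overline{g}=f\oplus g$ on $X$ using density of $X$ in $\overline{X}$ together with continuity of $\overline{f}$, $\overline{g}$ and of the extended group operations (including inversion). Your explicit sequence argument with $y_n\to\overline{y}$, $z_n:=y_n^{-1}x$ is just an unpacking of the paper's one-line reparametrization $\inf_{\tilde{y}\tilde{z}=x}\left\{\overline{f}(\tilde{y})+\overline{g}(\tilde{z})\right\}=\inf_{\tilde{y}\in\overline{X}}\left\{\overline{f}(\tilde{y})+\overline{g}(\tilde{y}^{-1}x)\right\}=\inf_{y\in X}\left\{f(y)+g(y^{-1}x)\right\}$.
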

\begin{proof}
It suffices to shows that $\chi$ is a surjective morphism of monoids. The surjectivity is clear, since if $F\in \mathcal{K}(\overline{X})$, we take  $f=F_{|X}$, then $\overline{f}=F$ on $X$ and so $\overline{f}=F$ on $\overline{X}$ by continuity.  Let us show that $\chi$ is a monoid morphism. Indeed, let $f,g \in \mathcal{K}(X)$. Using the continuity of $\overline{f}$, $\overline{g}$ and $y\mapsto y^{-1}$ and the density of $X$ in $\overline{X}$, we have for all $x\in X$,
\begin{eqnarray}
\overline{f}\oplus \overline{g}(x)&=&\inf_{\tilde{y}\hspace{0.3mm}\tilde{z}=x}\left\{\overline{f}(\tilde{y})+\overline{g}(\tilde{z}))\right\}\nonumber\\
&=& \inf_{\tilde{y}\in\overline{X}}\left\{\overline{f}(\tilde{y})+\overline{g}(\tilde{y}^{-1}x)\right\}\nonumber\\ 
&=& \inf_{y\in X}\left\{f(y)+g(y^{-1}x)\right\}.\nonumber\\ 
&=& f\oplus g (x)\nonumber
\end{eqnarray}
 Thus  $\overline{f}\oplus \overline{g}$ coincides with $f\oplus g=\overline{f\oplus g}$ on $X$. Hence $\overline{f}\oplus \overline{g}=\overline{f\oplus g}.$
\end{proof}

The following theorem shows that, up to an isometric isomorphism of groups, the group of unit of $\mathcal{K}(X)$ and the group of unit of $X$ are the same.
\begin{Prop} \label{inv} Let $(X,d)$ be a group which is complete metric invariant. Then, the group of unit $\mathcal{U}(\mathcal{K}(X))$ and $\hat{X}$ (which is isometrically isomorphic to $X$) coincides.  
\end{Prop}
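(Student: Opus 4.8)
The goal is to prove Proposition \ref{inv}: for a complete metric invariant group $(X,d)$, the group of unit $\mathcal{U}(\mathcal{K}(X))$ coincides with $\hat{X}=\gamma(X)$.

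My plan is to establish the two inclusions $\hat{X}\subseteq \mathcal{U}(\mathcal{K}(X))$ and $\mathcal{U}(\mathcal{K}(X))\subseteq \hat{X}$ separately, leveraging the machinery already developed.

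\textbf{The inclusion $\hat{X}\subseteq \mathcal{U}(\mathcal{K}(X))$.} Every Kuratowski function $\gamma(a)=\delta_a$ lies in $\mathcal{K}(X)$, since the defining Katetov inequalities (\ref{eq1}) for $\delta_a$ reduce precisely to the triangle inequality for $d$. I would then invoke Proposition \ref{int}: since $(X,.)$ is a group and is metric invariant, $(\hat{X},\oplus)$ is a group with the formula $\gamma(a)\oplus\gamma(b)=\gamma(ab)$ holding throughout, and $\gamma(e)=\delta_e$ is the identity of $\mathcal{K}(X)$ by Proposition \ref{x2}. Consequently $\gamma(a)\oplus\gamma(a^{-1})=\gamma(e)=\delta_e$, so each $\gamma(a)$ is invertible inside $\mathcal{K}(X)$ with inverse $\gamma(a^{-1})$. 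This gives $\hat{X}\subseteq \mathcal{U}(\mathcal{K}(X))$ directly.

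\textbf{The inclusion $\mathcal{U}(\mathcal{K}(X))\subseteq \hat{X}$.} This is the main obstacle, where I expect to use the structural theorems. Let $f\in\mathcal{K}(X)$ be a unit, with inverse $g\in\mathcal{K}(X)$ so that $f\oplus g=\gamma(e)=d(e,.)$. Since $\mathcal{K}(X)\subseteq Lip^1(X)$, both $f$ and $g$ are $1$-Lipschitz functions. I would then apply Corollary \ref{lips} to the pair $(f,g)$: the hypothesis $f\oplus g=d(e,.)$ is exactly assertion $(1)$ of that corollary, so assertion $(2)$ gives $\tilde{y}\in X$ and $c\in\R$ with $f(.)=d(\tilde{y},.)+c=\gamma(\tilde{y})+c$. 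The remaining task is to show the constant $c$ must vanish, forcing $f=\gamma(\tilde{y})\in\hat{X}$. This follows from the Katetov constraint: a Katetov map $f$ satisfies $d(x,y)\le f(x)+f(y)$ and the lower bound $f\ge 0$ (taking $x=y$ in (\ref{eq1}) gives $0\le 2f(x)$), and for $f=\gamma(\tilde y)+c$ one has $\inf_X f=c$ attained at $\tilde y$; a genuine Katetov map, corresponding to a one-point metric extension, must have $\inf_X f=0$ (equivalently $f(\tilde y)=0$), whence $c=0$ and $f=\gamma(\tilde{y})$.

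The crux is therefore the normalization forcing $c=0$, and the clean way to see it is that for $f=\gamma(\tilde y)+c$ the left inequality in (\ref{eq1}) applied at $x=\tilde y$ yields $d(\tilde y,y)=|f(\tilde y)-f(y)|\le d(\tilde y,y)$ with equality, while the right inequality at $x=y=\tilde y$ gives $0=d(\tilde y,\tilde y)\le 2f(\tilde y)=2c$, and applying it again with the Lipschitz-realized equality in the opposite direction pins $f(\tilde y)=0$, so $c=0$. Combining both inclusions, $\mathcal{U}(\mathcal{K}(X))=\hat{X}$, and since $\gamma$ is an isometric isomorphism onto $(\hat{X},\oplus)$ by Proposition \ref{int}, $\hat{X}$ is isometrically isomorphic to $X$, as claimed.
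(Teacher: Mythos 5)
Your overall architecture is the same as the paper's: the inclusion $\hat{X}\subseteq \mathcal{U}(\mathcal{K}(X))$ via Proposition \ref{int} and Proposition \ref{x2}, and the reverse inclusion via Corollary \ref{lips} (the paper routes this through $\mathcal{U}(\mathcal{K}(X))\subseteq \mathcal{U}(Lip^1_+(X))=\hat{X}$, i.e.\ through Corollary \ref{3}, which itself rests on Corollary \ref{lips}; you apply Corollary \ref{lips} directly, which is fine). The problem is your normalization step forcing $c=0$. You correctly get $c=f(\tilde y)\geq 0$ from $0=d(\tilde y,\tilde y)\leq 2f(\tilde y)$, but the other inequality $c\leq 0$ is not established: the assertion that a Katetov map must satisfy $\inf_X f=0$ is simply false. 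For any $x_0\in X$ and any $c>0$ the function $f=\gamma(x_0)+c$ \emph{is} a Katetov map, since $|f(x)-f(y)|=|d(x_0,x)-d(x_0,y)|\leq d(x,y)$ and $f(x)+f(y)=d(x_0,x)+d(x_0,y)+2c\geq d(x,y)$; a one-point metric extension places the new point at distance $f(x)$ from $x$, and nothing forces that point to be in the closure of $X$. Your sentence about ``applying it again with the Lipschitz-realized equality in the opposite direction'' does not produce the inequality $c\leq 0$ from properties of $f$ alone, because no such deduction exists.

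The missing ingredient is the \emph{inverse}. Corollary \ref{lips} gives you not only $f=\gamma(\tilde y)+c$ but also $g=\gamma(\tilde y^{-1})-c$, and since $f$ is a unit of $\mathcal{K}(X)$ its inverse $g$ lies in $\mathcal{K}(X)$ as well, hence $g\geq 0$; evaluating at $x=\tilde y^{-1}$ gives $-c=g(\tilde y^{-1})\geq 0$, so $c\leq 0$ and therefore $c=0$. This is exactly the role played in the paper by the positivity of \emph{all} elements of $\mathcal{U}(Lip^1_+(X))$ (both a unit and its inverse) in the proof of Corollary \ref{3}$(2)$. With that one-line repair your argument is correct and essentially coincides with the paper's.
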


\begin{proof} Since $(X,d)$ be a complete metric invariant group, from Proposition \ref{int} we get that $\hat{X}\subset \mathcal{U}(\mathcal{K}(X))$. On the other hand,  $\mathcal{U}(\mathcal{K}(X))\subset \mathcal{U}(Lip^1_+(X))=\hat{X}$.
\end{proof}

We deduce the following analogous to the Banach-Stone theorem.
\begin{Thm} \label{coucou} Let $(X,d)$ and $(Y,d')$ be two  complete metric invariant groups. Then, a map $\Phi : (\mathcal{K}(X),\oplus,d_{\infty})\rightarrow (\mathcal{K}(Y),\oplus,d_{\infty})$ is a monoid isometric isomorphism if, and only if there exists a group isometric isomorphism $T: (X,d)\rightarrow (Y,d')$ such that $\Phi(f)=f\circ T^{-1}$ for all $f\in \mathcal{K}(X)$. Consequently, $Aut_{Iso}(\mathcal{K}(X))$  is isomorphic as group to $Aut_{Iso}(X)$.
\end{Thm}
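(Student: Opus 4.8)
The plan is to prove both implications; the forward one is routine, while the converse rests on the reconstruction identity $f(x)=d_\infty(f,\gamma(x))$ recalled above for Katetov maps, together with Proposition \ref{inv}. For the easy direction, suppose $T:(X,d)\to(Y,d')$ is a group isometric isomorphism and put $\Phi(f):=f\circ T^{-1}$. First I would check that $\Phi$ maps $\mathcal{K}(X)$ into $\mathcal{K}(Y)$: since $T^{-1}$ is a bijective isometry, the Katetov inequalities (\ref{eq1}) for $f$ transfer verbatim to $f\circ T^{-1}$. That $\Phi$ is a surjective isometry for $d_\infty$, with inverse $g\mapsto g\circ T$, is immediate because $T^{-1}$ is an onto isometry. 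The only point that uses the group structure is the morphism property $\Phi(f\oplus g)=\Phi(f)\oplus\Phi(g)$: writing both sides as infima and using that $T^{-1}$ is a group homomorphism (so $st=y$ in $Y$ if and only if $(T^{-1}s)(T^{-1}t)=T^{-1}y$ in $X$), the change of variables $u=T^{-1}s$, $v=T^{-1}t$ shows the two infima coincide.

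For the converse, let $\Phi$ be a monoid isometric isomorphism. The key first step is that any monoid isomorphism carries the group of units onto the group of units (it preserves the identity and inverses), so $\Phi$ restricts to a group isomorphism $\hat X\to\hat Y$; here I invoke Proposition \ref{inv}, which identifies $\mathcal{U}(\mathcal{K}(X))=\hat X$ and $\mathcal{U}(\mathcal{K}(Y))=\hat Y$. Since by Proposition \ref{int} the Kuratowski operators $\gamma_X:X\to\hat X$ and $\gamma_Y:Y\to\hat Y$ are isometric isomorphisms of groups, the composite $T:=\gamma_Y^{-1}\circ(\Phi|_{\hat X})\circ\gamma_X$ is a group isometric isomorphism from $X$ onto $Y$, and by construction it satisfies $\Phi(\gamma_X(x))=\gamma_Y(T(x))$ for every $x\in X$.

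It remains to recover all of $\Phi$ from its action on the units, and this is where the reconstruction identity does the work. For an arbitrary $f\in\mathcal{K}(X)$ and any $y\in Y$, set $x:=T^{-1}(y)$, so that $\gamma_Y(y)=\Phi(\gamma_X(x))$; then, using that $\Phi$ is an isometry for $d_\infty$,
\begin{align*}
\Phi(f)(y)&=d_\infty\!\left(\Phi(f),\gamma_Y(y)\right)=d_\infty\!\left(\Phi(f),\Phi(\gamma_X(x))\right)\\
&=d_\infty\!\left(f,\gamma_X(x)\right)=f(x)=f\!\left(T^{-1}(y)\right).
\end{align*}
Hence $\Phi(f)=f\circ T^{-1}$, which is exactly the required canonical form.

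I expect the main obstacle to be packaging the first step of the converse cleanly, namely verifying that the abstract monoid isomorphism genuinely sends $\hat X$ onto $\hat Y$ and interacts correctly with the two Kuratowski identifications, whereas the final reconstruction, though decisive, is a one-line computation once the identity $f(\cdot)=d_\infty(f,\gamma(\cdot))$ is available. For the stated consequence I would specialize to $X=Y$ and observe that the assignment $\Phi\mapsto T$ is a bijection by the equivalence just proved and is multiplicative, since if $\Phi_i(f)=f\circ T_i^{-1}$ then $(\Phi_1\circ\Phi_2)(f)=f\circ(T_1\circ T_2)^{-1}$; therefore $\Phi\mapsto T$ is a group isomorphism $Aut_{Iso}(\mathcal{K}(X))\cong Aut_{Iso}(X)$.
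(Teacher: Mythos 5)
Your proposal is correct and follows essentially the same route as the paper: both directions are handled identically, with the converse resting on the transfer of the group of units via Proposition \ref{inv}, the definition $T:=\gamma^{-1}\circ\Phi_{|\hat{X}}\circ\gamma$, and the reconstruction of $\Phi(f)$ from the identity $f(x)=d_{\infty}(f,\gamma(x))$ together with the isometry of $\Phi$. The extra details you supply (the verification of the easy direction and the multiplicativity of $\Phi\mapsto T$ for the automorphism-group statement) are points the paper leaves implicit.
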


\begin{proof} If $T: (X,d)\rightarrow (Y,d')$ is an group isometric isomorphism, clearly $\Phi(f):=f\circ T^{-1}$ gives an monoid isometric isomorphism  from $(\mathcal{K}(X),\oplus,d_{\infty})$ onto $(\mathcal{K}(Y),\oplus,d_{\infty})$.

For the converse, let $\Phi$ be monoid isometric isomorphism  from $(\mathcal{K}(X),d_{\infty})$ onto $(\mathcal{K}(Y),d_{\infty})$, then $\Phi$ maps isometrically the group of unit of $\mathcal{K}(X)$ onto the group of unit of $\mathcal{K}(Y)$. Using Proposition \ref{inv}, $\Phi$ maps isometrically the group $\hat{X}$ onto $\hat{Y}$. Then, the map $$T:=\gamma^{-1}\circ \Phi_{|\hat{X}}\circ\gamma$$ gives an isometric group isomorphism from $X$ onto $Y$ by Proposition \ref{int}, where $\Phi_{|\hat{X}}$ denotes the restriction of $\Phi$ to $\hat{X}$. Since $\Phi$ is isometric we have for all $f\in \mathcal{K}(X)$ and all $x\in X$
$$f(x)=d_{\infty}\left(f,\delta_x\right)=d_{\infty}\left(\Phi(f),\Phi(\delta_x)\right)=d_{\infty}\left(\Phi(f),\delta_{T(x)}\right)=\Phi(f)\left(T(x)\right)$$
which conclude the proof.
\end{proof}
\begin{Lem} \label{x3} Let $(M,d)$ be a metric monoid, $\mathcal{U}(M)$ its group of unit. Suppose that $d(xu,yu)\leq d(x,y)$ and $d(ux,uy)\leq d(x,y)$ for all $x, y,u\in M$, and $d(xu,yu)=d(ux,uy)= d(x,y)$ for all $x, y\in M$ and all $u\in\mathcal{U}(M)$. Then, for all $x \in X$ and all $a,b\in \mathcal{U}(X)$, we have the following formula
$$d(x,ab)=\inf_{yz=x}\left\{d(y,a) +d(z,b)\right\}.$$ 
\end{Lem}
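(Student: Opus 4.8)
The plan is to prove the asserted equality by establishing the two inequalities separately, exploiting that $a$ and $b$ are invertible while $M$ itself is only a monoid. Up to the symmetry of $d$, the right-hand side $\inf_{yz=x}\{d(y,a)+d(z,b)\}$ is exactly $\gamma(a)\oplus\gamma(b)(x)$, and $d(x,ab)=\gamma(ab)(x)$, so the statement is precisely the identity $\gamma(a)\oplus\gamma(b)=\gamma(ab)$ in the present weaker setting where the isometry property is assumed only for units.

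For the lower bound $d(x,ab)\le \inf_{yz=x}\{d(y,a)+d(z,b)\}$, I would use only the two global contraction hypotheses. For any $y,z$ with $yz=x$, the triangle inequality together with right-contraction by $z$ and left-contraction by $a$ gives $d(x,ab)=d(yz,ab)\le d(yz,az)+d(az,ab)\le d(y,a)+d(z,b)$; taking the infimum over all admissible $(y,z)$ yields the bound. This is exactly the computation of Lemma \ref{loop} part $1)$, whose hypotheses (contraction on both sides) hold here, so I may invoke it directly.

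For the upper bound $\inf_{yz=x}\{d(y,a)+d(z,b)\}\le d(x,ab)$, I would exhibit an explicit admissible decomposition of $x$. Since $b\in\mathcal{U}(M)$, the inverse $b^{-1}$ exists, and setting $y:=xb^{-1}$, $z:=b$ gives $yz=(xb^{-1})b=x(b^{-1}b)=x$ by associativity. Evaluating, $d(y,a)+d(z,b)=d(xb^{-1},a)+d(b,b)=d(xb^{-1},a)$. The decisive step is that right multiplication by the unit $b$ is an isometry, whence $d(xb^{-1},a)=d((xb^{-1})b,ab)=d(x,ab)$, so the infimum is at most $d(x,ab)$. (Symmetrically one could take $y:=a$, $z:=a^{-1}x$ and use left multiplication by the unit $a$.) Combining the two inequalities gives the equality.

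The argument is short, and the only point requiring care, which I regard as the crux, is to invoke the isometry hypothesis strictly where one of the multiplied elements is a unit: the lower bound needs nothing beyond the global contraction inequalities, whereas the upper bound depends essentially on the invertibility of $b$ (equivalently $a$), both to form the decomposition $x=(xb^{-1})b$ and to upgrade the contraction into an exact isometry. Because $M$ is merely a monoid, no such decomposition is available for arbitrary $a,b$, which is exactly why the statement is restricted to $a,b\in\mathcal{U}(M)$.
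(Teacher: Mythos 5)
Your proof is correct and follows essentially the same route as the paper: the lower bound via the triangle inequality $d(yz,ab)\le d(yz,az)+d(az,ab)$ combined with the contraction hypotheses, and the upper bound via the explicit decomposition $y=xb^{-1}$, $z=b$ together with the fact that right multiplication by the unit $b$ is an isometry. The paper writes out the same chain of inequalities directly rather than citing Lemma \ref{loop}, but the argument is identical.
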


\begin{proof} Let  $a, b \in \mathcal{U}(M)$ and $x\in M$, we have
\begin{eqnarray}
\inf_{yz=x}\left\{d(y,a) +d(z,b)\right\} &\leq& d(xb^{-1},a)\hspace{2mm} (\textnormal{with}\hspace{2mm} y=xb^{-1}; z=b)\nonumber\\
                                                            &=& d(x,ab)\nonumber                                                          
\end{eqnarray}
On the other hand, 
\begin{eqnarray}
\inf_{yz=x}\left\{d(y,a) +d(z,b)\right\} &\geq &\inf_{yz=x}\left\{d(yz,az)+d(az,ab)\right\}  \nonumber\\
                                               &\geq &\inf_{yz=x} d(yz,ab) \hspace{2mm} (\textnormal{by using the triangular inequality})\nonumber\\
                                               &= & d(x,ab).\nonumber
\end{eqnarray}
Thus, $d(x,ab)=\inf_{yz=x}\left\{d(y,a) +d(z,b)\right\}$.
\end{proof}

We obtain the following formula.
\begin{Cor} \label{coucou1} Let $(X,d)$ be a group which is metric invariant.  Let $f\in \mathcal{K}(X)$ and $a, b\in X$. Then
$$f(ab)=\inf_{\varphi\oplus \psi=f}\left\{\varphi(a)+\psi(b)\right\}.
$$
\end{Cor}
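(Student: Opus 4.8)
The plan is to read the right-hand side not as an inf-convolution on $X$ but as the abstract inf-convolution formula of Lemma \ref{x3} applied to the metric monoid $M=(\mathcal{K}(X),\oplus,d_\infty)$ itself. Once this is recognised, the entire proof reduces to translating the three quantities appearing there back into values of functions, using only the basic Kuratowski identity $d_\infty(h,\delta_x)=h(x)$ for $h\in\mathcal{K}(X)$ together with the isomorphism of $\hat X$ with $X$.

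First I would verify that $(\mathcal{K}(X),\oplus,d_\infty)$ meets the hypotheses of Lemma \ref{x3}. By Proposition \ref{x2} it is a monoid with identity $\delta_e$, and part $(a)$ of that proposition provides exactly the two contraction estimates $d_\infty(h\oplus u,h'\oplus u)\le d_\infty(h,h')$ and $d_\infty(u\oplus h,u\oplus h')\le d_\infty(h,h')$ valid for all elements $u$, which is the first hypothesis of Lemma \ref{x3}. The second hypothesis asks for equality when $u$ ranges over the group of units; part $(b)$ of Proposition \ref{x2} supplies precisely this equality for every $u=\delta_x\in\hat X$, and Proposition \ref{inv} identifies the group of units $\mathcal{U}(\mathcal{K}(X))$ with $\hat X$, so the isometry condition holds for all units. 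Since Proposition \ref{inv} is stated for complete $X$, for a general metric invariant group I would either pass to the completion $\overline X$ via the isometric monoid isomorphism $\chi$ of Lemma \ref{Extension} (checking that $\chi(\delta_a)=\delta_a$ and $\overline f(ab)=f(ab)$, as $ab\in X$) and transport the resulting identity back, or simply note that the proof of Lemma \ref{x3} only invokes the unit-isometry for the single element $b=\delta_b$ that is substituted, so Proposition \ref{x2}$(b)$ alone already suffices without any appeal to completeness.

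With the hypotheses in force, I would apply Lemma \ref{x3} with $x:=f$, $a:=\delta_a$ and $b:=\delta_b$, both lying in $\mathcal{U}(\mathcal{K}(X))=\hat X$, to obtain
\[
d_\infty(f,\delta_a\oplus\delta_b)=\inf_{\varphi\oplus\psi=f}\left\{d_\infty(\varphi,\delta_a)+d_\infty(\psi,\delta_b)\right\}.
\]
To finish I would simplify each term: by Proposition \ref{int} the Kuratowski map is a group morphism on $\hat X$, whence $\delta_a\oplus\delta_b=\gamma(a)\oplus\gamma(b)=\gamma(ab)=\delta_{ab}$, and the identity $d_\infty(h,\delta_x)=h(x)$ gives $d_\infty(f,\delta_{ab})=f(ab)$, $d_\infty(\varphi,\delta_a)=\varphi(a)$ and $d_\infty(\psi,\delta_b)=\psi(b)$. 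Substituting these values converts the displayed equality into $f(ab)=\inf_{\varphi\oplus\psi=f}\{\varphi(a)+\psi(b)\}$, which is the claim. Everything past the verification of hypotheses is a mechanical substitution, so the only genuinely delicate point — the part I expect to be the main obstacle — is confirming the unit-isometry hypothesis of Lemma \ref{x3} for arbitrary, not necessarily complete, $X$, which is why the reduction through Lemma \ref{Extension} (or the direct inspection of which hypotheses the proof of Lemma \ref{x3} actually consumes) deserves careful attention.
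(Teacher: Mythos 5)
Your proposal is correct and follows essentially the same route as the paper: apply Lemma \ref{x3} to the monoid $M=(\mathcal{K}(X),\oplus,d_{\infty})$ with the units $\delta_a,\delta_b\in\hat{X}$, then translate via $\delta_a\oplus\delta_b=\delta_{ab}$ and $d_{\infty}(h,\delta_x)=h(x)$. You are in fact more scrupulous than the paper about the completeness hypothesis hidden in Proposition \ref{inv}; your observation that the proof of Lemma \ref{x3} only consumes the unit-isometry for the specific units substituted (supplied directly by Proposition \ref{x2}$(b)$) cleanly disposes of that point.
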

\begin{proof} Since the monoid  $(\mathcal{K}(X),\oplus, d_{\infty})$ satisfy the Proposition \ref{x2}, by applying Lemma \ref{x3} to the monoid $M=(\mathcal{K}(X),\oplus, d_{\infty})$ and using the fact that $\hat{X}\subset \mathcal{U}(\mathcal{K}(X))$($= \hat{\overline{X}}$) and $d_{\infty}(g,\gamma(x))=g(x)$ for all $x\in X$ and all $g\in \mathcal{K}(X)$, we obtain for all $f\in \mathcal{K}(X)$ and all $a, b\in X$: 
\begin{eqnarray}
f(ab)=d_{\infty}(f,\gamma(ab))=d_{\infty}(f,\gamma(a)\oplus \gamma(b))&=&\inf_{\varphi\oplus \psi=f}\left\{d_{\infty}(\varphi,\gamma(a)) + d_{\infty}(\psi,\gamma(b))\right\}\nonumber\\
& =& \inf_{\varphi\oplus \psi=f}\left\{ \varphi(a)+ \psi(b))\right\}.\nonumber
\end{eqnarray}
This conclude the proof.
\end{proof}
\subsection{The convex cone structure of $\mathcal{K}_C(X)$.}
Let $(X,\|.\|)$ be a Banach space. We recall that $\mathcal{K}_C(X):=\left\{f\in \mathcal{K}(X): f \hspace{2mm} \textnormal{convex} \right\}.$ Since the inf-convolution of convex functions is convex, the set $(\mathcal{K}_C(X),\oplus)$ is a complete metric space and commutative submonoid of $(\mathcal{K}(X),\oplus)$. We equip $\mathcal{K}_C(X)$ with the external law $\star$ defined as follows: for all $f\in \mathcal{K}_C(X)$ and all $\lambda\in \R^{+}$ by
$$\lambda \star f\left(x\right):=\lambda f\left(\frac{x}{\lambda}\right); \forall x\in X\hskip2mm if \hskip2mm \lambda>0$$
$$0\star f:=\gamma(0):=\|.\|.$$
 We recall below the definition of a convex cone.
\begin{Def}\label{cone}
A commutative monoid $(C,\oplus)$ equipped with a scalar multiplication map
\begin{eqnarray} \star : \R^+\times C & \rightarrow & C\nonumber \\
(\lambda,c) & \mapsto & \lambda\star c \nonumber 
\end{eqnarray} is said to be a convex cone if and only if it satisfies the following properties :
\begin{itemize}
\item[$1)$] $1\star c=c$ and $0\star c=e_C$, for all $c\in C$ where $e_C$ denotes the identity element of $(C,\oplus)$.
\item[$2)$] $\left(\alpha+\beta\right)\star c=\left(\alpha\star c\right)\oplus\left(\beta\star c\right)$ for all $\alpha, \beta \in \R^+$ and all $c\in C$. 
\item[$3)$] $\lambda\star\left(c\oplus c'\right)=\left(\lambda\star c\right) \oplus \left(\lambda\star c'\right)$ for all $\lambda \in \R^+$ and all $c,c'\in C$.
\end{itemize} 
\end{Def}
The following proposition is easily verified.
\begin{Prop} The space $(\mathcal{K}_C(X),\oplus,\star,d_{\infty})$ is a complete metric convex cone with the identity element $\gamma(0)$.
\end{Prop}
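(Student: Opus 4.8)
The plan is to verify the three defining properties of a convex cone from Definition~\ref{cone}, after first confirming that $\star$ genuinely maps $\R^{+}\times\mathcal{K}_C(X)$ into $\mathcal{K}_C(X)$ and that the monoid structure and completeness are already in place. The commutative monoid structure of $(\mathcal{K}_C(X),\oplus)$ with identity $\gamma(0)=\|\cdot\|$, together with the completeness of $(\mathcal{K}_C(X),d_{\infty})$, has already been recorded: it is a closed commutative submonoid of $(\mathcal{K}(X),\oplus)$, the closedness coming from the fact that a uniform limit of convex functions is convex. So only the interaction of $\star$ with $\oplus$ remains to be checked.

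First I would verify that $\lambda\star f\in\mathcal{K}_C(X)$ whenever $f\in\mathcal{K}_C(X)$ and $\lambda\geq 0$. For $\lambda=0$ this is the definition $0\star f=\gamma(0)\in\mathcal{K}_C(X)$. For $\lambda>0$, convexity of $x\mapsto\lambda f(x/\lambda)$ is immediate from convexity of $f$, while the two Katetov inequalities in (\ref{eq1}) follow by the change of variable $x\mapsto x/\lambda$: from $|f(x/\lambda)-f(y/\lambda)|\leq\|x/\lambda-y/\lambda\|$ one gets $|\lambda\star f(x)-\lambda\star f(y)|\leq\|x-y\|$, and from $\|x/\lambda-y/\lambda\|\leq f(x/\lambda)+f(y/\lambda)$ one gets $\|x-y\|\leq\lambda\star f(x)+\lambda\star f(y)$. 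Axiom $1)$ is then trivial, since $1\star f=f$ by definition and $0\star f=\gamma(0)$ is exactly $e_C$. Axiom $3)$ reduces, for $\lambda>0$, to the change of variable $(u,v)=(\lambda y,\lambda z)$ inside the infimum defining $\oplus$, which turns $\inf_{u+v=x}\{\lambda f(u/\lambda)+\lambda g(v/\lambda)\}$ into $\lambda\inf_{y+z=x/\lambda}\{f(y)+g(z)\}$; the case $\lambda=0$ is handled by both sides collapsing to $\gamma(0)=e_C$.

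The heart of the argument, and the step I expect to be the main obstacle, is Axiom $2)$, the identity $(\alpha\star f)\oplus(\beta\star f)=(\alpha+\beta)\star f$ for $\alpha,\beta>0$ (the cases where some weight vanishes follow from Axiom $1)$ and the identity property of $e_C$). Writing out the left-hand side and substituting $y=\alpha u$, $z=\beta v$ gives
\[
(\alpha\star f)\oplus(\beta\star f)(x)=\inf_{\alpha u+\beta v=x}\{\alpha f(u)+\beta f(v)\}.
\]
For the inequality $\geq$ I would set $t=\alpha/(\alpha+\beta)$ and use convexity of $f$:
\[
\alpha f(u)+\beta f(v)=(\alpha+\beta)\bigl(t f(u)+(1-t)f(v)\bigr)\geq(\alpha+\beta)f\bigl(tu+(1-t)v\bigr),
\]
and since $tu+(1-t)v=(\alpha u+\beta v)/(\alpha+\beta)=x/(\alpha+\beta)$, the right-hand side equals $(\alpha+\beta)\star f(x)$. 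For the reverse inequality I would take the diagonal choice $u=v=x/(\alpha+\beta)$, which satisfies $\alpha u+\beta v=x$ and yields precisely the value $(\alpha+\beta)f(x/(\alpha+\beta))=(\alpha+\beta)\star f(x)$.

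Convexity is used exactly once, in the $\geq$ direction of Axiom $2)$; this is the only place where the cone structure genuinely relies on working inside $\mathcal{K}_C(X)$ rather than $\mathcal{K}(X)$. Assembling the well-definedness of $\star$, the already established monoid structure and completeness, and the verification of the three axioms completes the proof.
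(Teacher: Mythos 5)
Your proof is correct and is exactly the direct verification the paper has in mind when it says the proposition "is easily verified": well-definedness of $\star$ on Katetov convex functions, the change of variables for axiom $3)$, and convexity used precisely once for the $\geq$ direction of axiom $2)$ with the diagonal choice giving $\leq$. Nothing to add.
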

The complete metric convex cone structure of $(\mathcal{K}_C(X),\oplus,\star,d_{\infty})$ induce a structure of Banach space on $\hat{X}$ by setting $\lambda\star\gamma(x):=(-\lambda)\star\gamma(-x)$, if $\lambda <0$ and taking the norm $|||\gamma(x)|||:=d_{\infty}\left(\gamma(x),\gamma(0)\right)$ for all $x\in X$. In fact, we can also say that the Banach space $X$ extend its structure canonically to some convex cone structure on $\mathcal{K}_C(X)$.
\begin{Prop} The Kuratowski operator  $\gamma : \left(X,+,.,\|.\|\right)\rightarrow \left(\hat{X},\oplus,\star,|||.|||\right)$ is an isometric isomorphism of Banach spaces. 
\end{Prop}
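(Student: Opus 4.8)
The plan is to verify the three defining properties of an isometric isomorphism of Banach spaces --- additivity, $\R$-homogeneity, and norm preservation --- together with bijectivity, and then to transport completeness from $X$. Additivity is already available: the norm distance $d(x,y)=\|x-y\|$ is translation invariant, so $(X,+,d)$ is a commutative group metric invariant, and Proposition \ref{int} (through Lemma \ref{loop}) yields $\gamma(a)\oplus\gamma(b)=\gamma(a+b)$ for all $a,b\in X$. In particular $\hat{X}$ is closed under $\oplus$, and $\oplus$ corresponds under $\gamma$ exactly to the addition of $X$.

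Next I would check the homogeneity $\gamma(\lambda x)=\lambda\star\gamma(x)$ for every $\lambda\in\R$ and $x\in X$, by cases. For $\lambda>0$, the definition of $\star$ gives, for all $t\in X$,
$$\lambda\star\gamma(x)(t)=\lambda\,\gamma(x)\left(\frac{t}{\lambda}\right)=\lambda\left\|x-\frac{t}{\lambda}\right\|=\|\lambda x-t\|=\gamma(\lambda x)(t),$$
so $\lambda\star\gamma(x)=\gamma(\lambda x)$. For $\lambda=0$ the definition $0\star\gamma(x):=\gamma(0)$ agrees with $\gamma(0\cdot x)=\gamma(0)$, and for $\lambda<0$ the definition $\lambda\star\gamma(x):=(-\lambda)\star\gamma(-x)$ combined with the positive case gives $\gamma((-\lambda)(-x))=\gamma(\lambda x)$. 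Hence $\hat{X}$ is stable under $\star$ and the external law corresponds under $\gamma$ to scalar multiplication on $X$.

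For the norm, recall that the Kuratowski operator is an isometry, $d_{\infty}(\gamma(a),\gamma(b))=d(a,b)$. Consequently
$$|||\gamma(x)|||:=d_{\infty}\left(\gamma(x),\gamma(0)\right)=d(x,0)=\|x\|$$
for every $x\in X$, so $\gamma$ preserves norms. Being isometric, $\gamma$ is injective, and it is surjective onto $\hat{X}$ by the very definition $\hat{X}:=\gamma(X)$; thus $\gamma$ is a norm-preserving bijection that is additive and $\R$-homogeneous.

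Finally, these facts show that the addition, scalar multiplication, and norm on $\hat{X}$ are precisely the images under the bijection $\gamma$ of the corresponding structures on $X$, so $\gamma$ is a linear isometric bijection and $(\hat{X},\oplus,\star,|||.|||)$ inherits the vector-space axioms from $(X,+,.,\|.\|)$. Since $(X,\|.\|)$ is complete and $\gamma$ is an isometry, $(\hat{X},|||.|||)$ is complete as well, hence a Banach space, and $\gamma$ is an isometric isomorphism of Banach spaces. The only points needing attention are the bookkeeping in the negative-scalar case of homogeneity and the remark that linearity of $\gamma$ forces the transported operations on $\hat{X}$ to satisfy the vector-space axioms; the substantive inputs, namely $\gamma(a)\oplus\gamma(b)=\gamma(a+b)$ and the isometry of the Kuratowski embedding, are already established.
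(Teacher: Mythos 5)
Your proposal is correct and follows essentially the same route as the paper: the additivity and isometry are delegated to Proposition \ref{int} (via the translation invariance of the norm metric), and the homogeneity $\gamma(\lambda x)=\lambda\star\gamma(x)$ is verified by the same three-case computation ($\lambda>0$ directly, $\lambda=0$ and $\lambda<0$ from the definition of $\star$). You merely spell out the norm preservation, bijectivity onto $\hat{X}$, and completeness more explicitly than the paper does.
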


\begin{proof} Using Proposition \ref{int}, it just remains to prove that $\gamma(\lambda x)=\lambda\star\gamma(x)$ for all $x\in X$ and $\lambda\in \R$. Indeeed, let $x\in X$ and $\lambda\in \R^{*+}$, by definition $\gamma(\lambda x):y\mapsto \delta_{\lambda x}(y)=\|y-\lambda x\|=\lambda\|\frac{y}{\lambda}-x\|:=\lambda\star\delta_x(y)$. So, $\gamma(\lambda x)=\lambda\star\gamma(x)$. If $\lambda=0$, then by definition $0\star\gamma(x)=\gamma(0)$. If $\lambda<0$, $\gamma(\lambda x)=\gamma((-\lambda)(-x))=(-\lambda)\star\gamma(-x):=\lambda\star\gamma(x)$.
\end{proof}
\begin{Thm} Let $X$ and $Y$ two Banach spaces. Then, the spaces $(\mathcal{K}_C(X),\oplus,\star,d_{\infty})$ and  $(\mathcal{K}_C(Y),\oplus,\star,d_{\infty})$ are isometrically isomorphic as convex cone if, and only if, $X$ and $Y$ are isometrically isomorphic as Banach spaces.
\end{Thm}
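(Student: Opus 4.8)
The plan is to prove both implications. The forward direction is routine and follows the pattern of Theorem~\ref{coucou}; the converse follows the Banach--Stone strategy already used there, but now one must carry along the scalar multiplication $\star$ so as to recover not merely a group isomorphism but a linear one.

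For the easy direction, suppose $T:(X,\|.\|)\to(Y,\|.\|)$ is an isometric isomorphism of Banach spaces and set $\Phi(f):=f\circ T^{-1}$. Since $T$ is a bijective isometry, $f\circ T^{-1}$ is again a Katetov map, and since $T^{-1}$ is linear it preserves convexity; thus $\Phi$ maps $\mathcal{K}_C(X)$ bijectively onto $\mathcal{K}_C(Y)$. That $\Phi$ is $d_{\infty}$-isometric and a morphism for $\oplus$ is checked exactly as in the proof of Theorem~\ref{coucou}. It remains to verify compatibility with $\star$: for $\lambda>0$, using the linearity of $T^{-1}$,
\[
\Phi(\lambda\star f)(y)=(\lambda\star f)(T^{-1}y)=\lambda f\!\left(T^{-1}(y/\lambda)\right)=\lambda\,\Phi(f)(y/\lambda)=(\lambda\star\Phi(f))(y),
\]
and the case $\lambda=0$ is immediate since $\Phi(\gamma(0))=\gamma(0)$. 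Hence $\Phi$ is an isometric isomorphism of convex cones.

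For the converse, let $\Phi$ be an isometric isomorphism of convex cones. First I would identify the group of unit of the cone. On the one hand $\hat X\subset\mathcal{U}(\mathcal{K}_C(X))$, since by Proposition~\ref{int} one has $\gamma(a)\oplus\gamma(b)=\gamma(a+b)$ and in particular $\gamma(x)\oplus\gamma(-x)=\gamma(0)$, the identity of the cone. On the other hand any element invertible in the submonoid $\mathcal{K}_C(X)$ is a fortiori invertible in $\mathcal{K}(X)$, so $\mathcal{U}(\mathcal{K}_C(X))\subset\mathcal{U}(\mathcal{K}(X))=\hat X$ by Proposition~\ref{inv}. Therefore $\mathcal{U}(\mathcal{K}_C(X))=\hat X$, and likewise $\mathcal{U}(\mathcal{K}_C(Y))=\hat Y$. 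Being a monoid isomorphism, $\Phi$ maps $\hat X$ onto $\hat Y$ isometrically and carries $\oplus$-inverses to $\oplus$-inverses; being a cone morphism it also intertwines $\star$ on $\R^{+}$. I would then define
\[
T:=\gamma^{-1}\circ\Phi|_{\hat X}\circ\gamma : X\longrightarrow Y,
\]
and invoke the proposition asserting that $\gamma:(X,+,.,\|.\|)\to(\hat X,\oplus,\star,|||.|||)$ is an isometric isomorphism of Banach spaces. This makes $T$ additive (from $\oplus$), positively homogeneous (from $\star$ on $\R^{+}$) and isometric (from $d_{\infty}$ and the isometry of $\gamma$). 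Negative homogeneity follows because $\Phi$ preserves $\oplus$-inverses, hence respects the extension $\lambda\star\gamma(x):=(-\lambda)\star\gamma(-x)$ for $\lambda<0$; so $T$ is $\R$-linear and thus an isometric isomorphism of Banach spaces.

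The main obstacle I anticipate is precisely this linearity step in the converse: one must verify not merely that $T$ is an isometric group isomorphism, which by the argument of Theorem~\ref{coucou} would only recover the metric-invariant group structure, but that it is genuinely $\R$-linear. This forces one to combine the cone-morphism property of $\Phi$ on $\R^{+}$ with its preservation of $\oplus$-inverses, using crucially that the Banach-space structure on $\hat X$ is defined so that $\gamma$ is a linear isometry, so that all the algebraic data transported by $\Phi$ assemble into a single linear map.
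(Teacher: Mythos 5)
Your proposal is correct and follows essentially the same route as the paper, which proves this statement simply by declaring it ``similar to the proof of Theorem \ref{coucou}'': you transport the problem to the groups of units, identify $\mathcal{U}(\mathcal{K}_C(X))=\hat X$, define $T:=\gamma^{-1}\circ\Phi|_{\hat X}\circ\gamma$, and use the cone operations to upgrade the group isomorphism to an $\R$-linear isometry. Your write-up usefully supplies the details (in particular the homogeneity argument) that the paper leaves implicit.
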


\begin{proof} Similar to the proof of Theorem \ref{coucou}.
\end{proof}

Using the fixed point Theorem, we obtain the following proposition.
\begin{Prop} Let $X$ be a Banach space, $g\in \mathcal{K}_C(X)$ and $\lambda \in (0,1)$. Then, there exists a unique function $f_0\in \mathcal{K}_C(X)$ such that $(\lambda \star f_0)\oplus g=f_0$.
\end{Prop}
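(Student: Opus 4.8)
The plan is to exhibit $f_0$ as the unique fixed point of a contraction on the complete metric space $(\mathcal{K}_C(X),d_{\infty})$ and to close the argument with the Banach fixed point theorem. To this end I would introduce the self-map
\[
T:\mathcal{K}_C(X)\longrightarrow\mathcal{K}_C(X),\qquad T(f):=(\lambda\star f)\oplus g.
\]
The first thing to verify is that $T$ really maps $\mathcal{K}_C(X)$ into itself: since $(\mathcal{K}_C(X),\oplus,\star,d_{\infty})$ is a convex cone, the scalar multiplication keeps us inside $\mathcal{K}_C(X)$, so $\lambda\star f\in\mathcal{K}_C(X)$; and since $\mathcal{K}_C(X)$ is a submonoid of $(\mathcal{K}(X),\oplus)$, the element $(\lambda\star f)\oplus g$ again belongs to $\mathcal{K}_C(X)$.

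The heart of the matter is the estimate $d_{\infty}(T(f),T(h))\le\lambda\,d_{\infty}(f,h)$, which I would obtain in two steps. For the scalar part, using the defining formula $\lambda\star f(x)=\lambda f(x/\lambda)$ and the fact that $u\mapsto\lambda u$ is a bijection of $X$ (as $\lambda>0$), one computes
\[
d_{\infty}(\lambda\star f,\lambda\star h)=\sup_{x\in X}\bigl|\lambda f(x/\lambda)-\lambda h(x/\lambda)\bigr|=\lambda\sup_{u\in X}|f(u)-h(u)|=\lambda\,d_{\infty}(f,h).
\]
For the inf-convolution part, I would note that a Banach space is a commutative group metric invariant, so Proposition \ref{x2}$(a)$ applies on $\mathcal{K}(X)\supset\mathcal{K}_C(X)$ and yields the non-expansiveness $d_{\infty}(\varphi\oplus g,\psi\oplus g)\le d_{\infty}(\varphi,\psi)$. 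Taking $\varphi=\lambda\star f$ and $\psi=\lambda\star h$ and combining with the previous display gives the desired contraction estimate.

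Since $\lambda\in(0,1)$, the map $T$ is then a strict contraction on the complete metric space $(\mathcal{K}_C(X),d_{\infty})$ (completeness being recorded in the convex cone proposition above), and the Banach fixed point theorem produces a unique $f_0\in\mathcal{K}_C(X)$ with $T(f_0)=f_0$, that is $(\lambda\star f_0)\oplus g=f_0$. I do not anticipate a genuine obstacle; the only points demanding attention are checking that both $\star$ and $\oplus$ leave us inside $\mathcal{K}_C(X)$ so that $T$ is a well-defined endomorphism, and confirming that the Lipschitz constant of the scalar map is exactly $\lambda$, which comes directly from the normalization in the definition of $\star$.
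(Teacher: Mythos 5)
Your proposal is correct and follows essentially the same route as the paper: both define the self-map $f\mapsto(\lambda\star f)\oplus g$ on the complete metric space $(\mathcal{K}_C(X),d_{\infty})$, combine the non-expansiveness of $\oplus$ from Proposition \ref{x2} with the exact scaling $d_{\infty}(\lambda\star f,\lambda\star h)=\lambda\,d_{\infty}(f,h)$, and conclude by the Banach fixed point theorem. The only difference is that you spell out the well-definedness of the map and the computation of the scalar Lipschitz constant, which the paper leaves implicit.
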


\begin{proof} Let us consider the map $L: \mathcal{K}_C(X)\rightarrow \mathcal{K}_C(X)$ defined by $L(f)=(\lambda \star f)\oplus g$. Using Proposition \ref{x2} we have for all $f,f'\in \mathcal{K}_C(X)$, $$d_{\infty}(L(f),L(f'))\leq d_{\infty}(\lambda \star f,\lambda \star f')=\lambda d_{\infty}(f,f').$$
Since $\lambda \in (0,1)$, then $L$ is contractive map. So by the fixed point Theorem, there exists a unique function $f_0\in \mathcal{K}_C(X)$ such that $L(f_0)=f_0$.
\end{proof}
\bibliographystyle{amsplain}

\end{document}